\newcommand\blfootnote[1]{%
  \begingroup
  \renewcommand\thefootnote{}\footnote{#1}%
  \addtocounter{footnote}{-1}%
  \endgroup
}
\newcommand{\ZZ}{\mathbb{Z}}
\newcommand{\G}{\Gamma}
\newcommand{\la}{\langle}
\newcommand{\ra}{\rangle}
\newcommand{\uB}{\mathrm{uB}}
\newcommand{\auB}{\overline{\mathrm{uB}}}
\renewcommand{\SS}[2]{S_{#1}\hspace{-2pt}\cdot\hspace{-2pt} S_{#2}}
\newcommand{\SSx}[2]{S_{#1}\hspace{-3pt}-\hspace{-3pt}S_{#2}}
\newcommand{\Ki}{\mathrm{Ki}}
\newcommand{\Sp}{\mathrm{Sp}}
\newtheorem{theorem}{Theorem}[section]
\newtheorem{proposition}[theorem]{Proposition}
\newtheorem{problem}[theorem]{Problem}
\newtheorem{conjecture}[theorem]{Conjecture}
\theoremstyle{definition}
\newtheorem{construction}[theorem]{Construction}
\begin{document}

\begin{center}
\Large{\textbf{Distance-unbalancedness of graphs}} \\ [+4ex]
\v Stefko Miklavi\v c{\small$^{a, b, c,*}$}, Primo\v z \v Sparl{\small$^{a, c, d}$}
\\ [+2ex]
{\it \small 
$^a$University of Primorska, Institute Andrej Maru\v si\v c, Koper, Slovenia\\
$^b$University of Primorska, FAMNIT, Koper, Slovenia\\
$^c$Institute of Mathematics, Physics and Mechanics, Ljubljana, Slovenia\\ 
$^d$University of Ljubljana, Faculty of Education, Ljubljana, Slovenia\\
}
\end{center}

\blfootnote{
Email addresses: 
stefko.miklavic@upr.si (\v Stefko Miklavi\v c),
primoz.sparl@pef.uni-lj.si (Primo\v z \v Sparl)
\\
* - corresponding author
}


\hrule

\begin{abstract}
In this paper we propose and study a new structural invariant for graphs, called distance-unbalanced\-ness, as a measure of how much a graph is (un)balanced in terms of distances. Explicit formulas are presented for several classes of well-known graphs. Distance-unbalancedness of trees is also studied. A few conjectures are stated and some open problems are proposed.
\end{abstract}

\hrule

\begin{quotation}
\noindent {\em \small Keywords: } distance-unbalancedness, distance-balanced graph, Mostar index
\end{quotation}

\section{Introduction}
\label{sec:Intro}

In mathematical chemistry as well as in graph theory there is a great interest in the study of various graph parameters that are defined as sums of certain quantities over all vertices (such as the first Zagreb index and the Graovac-Pisanski index), over all pairs of adjacent vertices (such as the Szeged index, the second Zagreb index, the Balaban index, the Randi\'c index, the Mostar index and the irregularity of a graph) or over all pairs of vertices of the corresponding graph (such as the Wiener index and the total irregularity of a graph). See, for example, \cite{BDHKS, AD, DMSTZ18, GFK, KST, KST1, WB} for the most recent research on these indices. In many of these parameters the above mentioned quantities are distance-related. For instance, in the Wiener index the corresponding quantity is the distance between the two vertices in question while in the Szeged index the corresponding quantity for the pair $\{u,v\}$ is the product $n_{u,v}n_{v,u}$, where for any two vertices $x$ and $y$ of the graph in question $n_{x,y}$ is the number of vertices of the graph that are closer to $x$ than to $y$. 

The most recently introduced index of the above mentioned ones is the {\em Mostar index}~\cite{DMSTZ18} which is defined as
$$
	\mathrm{Mo}(\G) = \sum_{\{u,v\} \in E(\G)}|n_{u,v} - n_{v,u}|.
$$
This index is intimately related to the concept of distance-balancedness of graphs, which was first studied in~\cite{Han99} although the term {\em distance-balanced} was introduced nine years later~\cite{JKR08}. In terms of the above mentioned Mostar index a graph is distance-balanced if and only if its Mostar index is equal to $0$.

In~\cite{MikSpa18} the following related concept was introduced. For a pair of (not necessarily adjacent) vertices $u$ and $v$ of a (connected) graph we say that this pair of vertices is {\em balanced} if $n_{u,v} = n_{v,u}$. Thus, a connected graph is distance-balanced if and only if every pair of adjacent vertices is balanced. As there is no good reason why one would only be interested in the question of when every pair of adjacent vertices in a graph is balanced, the authors of~\cite{MikSpa18} (but see also~\cite{FreMik18}) proposed to study the following generalizations of the notion of distance-balancedness. A graph of diameter at least $\ell$ is said to be {\em $\ell$-distance-balanced} if each pair of its vertices at distance $\ell$ is balanced. Therefore, $1$-distance-balancedness is equivalent to being distance-balanced in the sense of~\cite{JKR08}. 

One could thus generalize the Mostar index in the sense that the {\em $\ell$-th Mostar index} of a connected finite graph $\G$ would be defined as
$$
	\mathrm{Mo}^\ell(\G) = \sum_{\{u,v\} \subseteq V(\G) \atop d(u,v) = \ell}|n_{u,v} - n_{v,u}|.
$$
However, to measure how much the graph is (un)balanced (in terms of distances) as a whole, one should really study the following parameter:
$$
	\mathrm{Mo}^*(\G) = \uB(\G) = \sum_{\{u,v\} \subseteq V(\G)} |n_{u,v} - n_{v,u}| = \sum_{\ell = 1}^D \mathrm{Mo}^\ell(\G),
$$
where $D$ is the diameter of $\G$. We call $\mathrm{Mo}^*(\G) = \uB(\G)$ the {\em distance-unbalancedness (index)} of $\G$. In~\cite{MikSpa18} the notion of high distance-balancedness was introduced where a graph is {\em highly distance-balanced} if each pair of vertices of the graph is balanced. In other words, $\G$ is highly distance-balanced if and only if $\uB(\G) = 0$. Therefore, the index $\uB(\G)$ is a measure of how ``unbalanced'' in terms of distances the graph $\G$ is. 

As with the Wiener index (which is defined as the sum of distances between all pairs of vertices of the graph), where this number divided by the number of all distinct pairs of vertices of the graph is also studied, it might also be interesting to study the {\em average distance-unbalancedness} of a graph. For a graph $\G$ of order $n$ this number is defined as 
$$
	\auB(\G) = \frac{\uB(\G)}{{n \choose 2}}.
$$

There are of course numerous interesting natural questions that arise. How close to $0$ can the (average) distance-unbalancedness of a graph that is not highly distance-balanced be? For each positive integer $n$ what is the smallest nonzero and the largest possible value of $\uB(\G)$ when $\G$ runs through the set of all connected graphs of order $n$? What if $\G$ runs just through the set of all members of order $n$ of a certain family of graphs (such as trees)?

Our paper is organized as follows. After some preliminaries in Section \ref{sec:Prelim} we compute the distance-unbalancedness index of some well-known families in Section \ref{sec:Fam}. We consider distance-unbalancedness of trees and general graphs in Sections \ref{sec:trees} and \ref{sec:min}, respectively. Along the way, a few conjectures are stated and some open problems are proposed.

\section{Preliminaries}
\label{sec:Prelim}

Throughout this paper all graphs are simple, undirected, finite and connected (unless otherwise specified). We first introduce some additional notation that will shorten the arguments regarding balancedness of pairs of vertices. For a graph $\G$ and its vertices $u, v$ we let $W_{u,v}(\G)$ be the set of all vertices of $\G$ that are closer to $u$ than to $v$, and we let $n_{u,v}(\G)$ be its cardinality (if the graph $\G$ is clear from the context we simply write $W_{u,v}$ and $n_{u,v}$). The {\em distance-unbalancedness} of the pair $\{u,v\}$ is then defined as $\uB_\G(u,v) = |n_{u,v}(\G) - n_{v,u}(\G)|$ (again, we usually omit the subscript $\G$). The Mostar index of a connected graph $\G$ is thus simply $\mathrm{Mo}(\G) = \sum_{\{u,v\} \in E(\G)}\uB(u,v)$ and the distance-unbalancedness index of $\G$ is $\uB(\G) = \sum_{\{u,v\} \subseteq V(\G)}\uB(u,v)$. 

For a positive integer $n$ the complete bipartite graph $K_{n,1}$ will be called the {\em star} of order $n+1$ and will be denoted by $S_n$. The graph obtained by taking a disjoint union of stars $S_n$ and $S_m$ and then adding an edge between the central vertex of $S_n$ (the one of valence $n$) and the central vertex of $S_m$ will be denoted by $\SS{n}{m}$ and called a {\em merged star}. The graph obtained from $\SS{n}{m}$ by subdividing the edge between the central vertices of $S_n$ and $S_m$ with one vertex (usually denoted by $x$) will be denoted by $\SSx{n}{m}$ and called a {\em subdivided merged star}.

The cycle (graph) of order $n \geq 3$ will be denoted by $C_n$ and the path of order $n$ (and thus length $n-1$) will be denoted by $P_n$. 

For an integer $k \geq 3$ and positive integers $n_1 \geq n_2 \geq \cdots \geq n_k$ we let the {\em spider graph} $\Sp(n_1,n_2,\ldots , n_k)$ be the tree of order $1+n_1+n_2+\cdots + n_k$ having one vertex of valence $k$ and all other vertices of valence at most $2$ such that the graph obtained by removing this unique vertex of valence $k$ is a disjoint union of $k$ paths of orders $n_1, n_2, \ldots , n_k$. For instance, for $n \geq 3$ the star $S_n$ is in fact the spider graph $\Sp(1,1,\ldots, 1)$ of order $n+1$.

\section{Distance-unbalancedness of some well-known families}
\label{sec:Fam}

As was pointed out in~\cite{MikSpa18} a sufficient condition for a pair of vertices $\{u,v\}$ of a graph $\G$ to be balanced is that there exists an automorphism of $\G$ interchanging $u$ and $v$. This simple but useful fact will be used throughout the paper. As an immediate corollary (see~\cite[Corollary~3.4]{MikSpa18}) each Cayley graph of an Abelian group is highly distance-balanced and thus its distance-unbalancedness index is $0$. 

In what follows we compute the distance-unbalancedness index $\uB(\G)$ for members of some well-known families of graphs. We first consider complete multipartite graphs.

\begin{proposition}
\label{pro:multi}
Let $n_1 \geq n_2 \geq n_3 \geq \cdots \geq n_k$ be positive integers. Then 
$$
	\uB(K_{n_1, n_2, \ldots , n_k}) = \sum_{1 \leq i < j \leq k}n_in_j(n_i - n_j).
$$
\end{proposition}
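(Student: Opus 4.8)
The plan is to compute $\uB(u,v)$ for each pair of vertices $\{u,v\}$ in $K_{n_1,\dots,n_k}$ directly and then sum. The key structural observation is that the distance in a complete multipartite graph takes only two values: $d(x,y) = 1$ if $x$ and $y$ lie in different parts, and $d(x,y) = 2$ if they lie in the same part (and $x \neq y$). Consequently, for any vertex $w$ and any pair $\{u,v\}$, whether $w$ is closer to $u$ or to $v$ depends only on the parts containing $u$, $v$ and $w$.

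First I would handle a pair $\{u,v\}$ with $u$ and $v$ in the \emph{same} part, say of size $n_i$. Any vertex $w$ outside this part is adjacent to both $u$ and $v$, hence equidistant; any vertex $w \neq u,v$ inside this part is at distance $2$ from both; and $u \in W_{u,v}$, $v \in W_{v,u}$. Thus $n_{u,v} = n_{v,u} = 1$ and every such pair is balanced, contributing $0$. (This is also immediate from the automorphism remark preceding the proposition, since the transposition $(u\ v)$ extends to an automorphism.) Next I would handle a pair $\{u,v\}$ with $u$ in the part of size $n_i$ and $v$ in the part of size $n_j$, $i \neq j$. Here $u$ and $v$ are adjacent. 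A vertex $w$ in the part of $u$ (including $u$ itself) has $d(w,u) \le 1 < d(w,v) = 1$ only when $w \ne u$... more carefully: $w = u$ gives $d=0$ vs $d=1$, and $w \ne u$ in $u$'s part gives $d(w,u)=2$ vs $d(w,v)=1$, so such $w$ lies in $W_{v,u}$. Symmetrically, $w$ in $v$'s part lies in $W_{u,v}$ except $w=v$. A vertex $w$ in any third part is adjacent to both, hence equidistant. So $W_{u,v}$ consists of $v$ together with the other $n_j - 1$ vertices of $v$'s part... wait — I must recheck the sign: $w$ in $v$'s part, $w \ne v$, has $d(w,u) = 1$ and $d(w,v) = 2$, so $w$ is closer to $u$, i.e. $w \in W_{u,v}$; and $v \in W_{v,u}$. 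Hence $n_{u,v} = n_j - 1$ (the vertices of $v$'s part other than $v$) plus $u$ itself $= n_j$, and symmetrically $n_{v,u} = n_i$. Therefore $\uB(u,v) = |n_i - n_j| = n_i - n_j$ since $n_i \ge n_j$.

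Finally I would sum: for each unordered pair of \emph{distinct} parts indexed by $i<j$, there are exactly $n_i n_j$ pairs of vertices $\{u,v\}$ with $u$ in part $i$ and $v$ in part $j$, each contributing $n_i - n_j$; pairs within a common part contribute $0$. This yields $\uB(K_{n_1,\dots,n_k}) = \sum_{1\le i<j\le k} n_i n_j (n_i - n_j)$, as claimed.

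I do not anticipate a genuine obstacle here — the argument is entirely a bookkeeping exercise once the two-valued-distance observation is in place. The only point requiring care is getting the bookkeeping of which vertices fall in $W_{u,v}$ versus $W_{v,u}$ correct (in particular remembering to count $u$ and $v$ themselves on the correct sides), and confirming that third-part vertices genuinely cancel; after that the sum is immediate.
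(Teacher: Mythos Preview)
Your argument is correct and is essentially the same as the paper's: both use the automorphism (or direct symmetry) to dispose of same-part pairs, then for $u\in V_i$, $v\in V_j$ with $i<j$ compute $n_{u,v}=1+(n_j-1)=n_j$ and $n_{v,u}=1+(n_i-1)=n_i$, giving $\uB(u,v)=n_i-n_j$, and sum over the $n_in_j$ such pairs. The only thing to fix is cosmetic: your write-up contains stream-of-consciousness self-corrections (``wait --- I must recheck the sign'') that should be cleaned out of a final version, but the mathematics matches the paper's proof line for line.
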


\begin{proof}
Let $\G = K_{n_1, n_2, \ldots , n_k}$. Partition its vertex-set into sets $V_1, V_2, \ldots , V_k$ with $|V_i| = n_i$, $1 \leq i  \leq k$, in such a way that for each $i$, $1 \leq i  \leq k$, each vertex of $V_i$ is adjacent to every vertex outside $V_i$ (and to no vertex from $V_i$). As any pair of vertices from $V_i$, $1 \leq i \leq k$, can be interchanged by an automorphism of $\G$, the only possible unbalanced pairs of vertices are pairs $\{u,v\}$, where $u \in V_i$ and $v \in V_j$ for some $1 \leq i < j \leq k$. Clearly, $n_{u,v} = |1+(n_j-1) - (1 + (n_i-1))| = n_i - n_j$, and so the result follows.
\end{proof}

As an immediate corollary of the above proposition we get the distance-unbalancedness index of stars. Namely, for any positive integer $n \geq 1$ we get that $\uB(S_n) = \uB(K_{n,1}) = n(n-1)$ and consequently $\auB(S_n) = \frac{2(n-1)}{n+1}$. The {\em wheel} graphs, which are related to them (the wheel graph $W_{n}$, where $n \geq 3$, is obtained from the star $S_n$ by connecting all of its leafs into a single cycle) have a slightly smaller distance-unbalancedness index. Namely, the following holds (the proof of the stated result is straightforward and is left to the reader). 

\begin{proposition}
For each $n \geq 3$ we have that $\uB(W_n) = n(n-3)$.
\end{proposition}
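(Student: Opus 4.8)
The plan is to classify all pairs of vertices of $W_n$ according to their type and determine the distance-unbalancedness $\uB(u,v)$ for each type, exploiting the large automorphism group of the wheel. Write $W_n$ with hub $h$ (the center of the original star $S_n$) and rim vertices $v_0, v_1, \ldots, v_{n-1}$ arranged in a cycle $C_n$, so $v_i \sim v_{i+1}$ (indices mod $n$) and $h \sim v_i$ for all $i$. The dihedral group $D_n$ acts on the rim and fixes $h$, and for each $i$ there is an automorphism swapping $v_i$ and $v_{i+1}$ (the reflection of the cycle through the midpoint of that edge), which also fixes $h$. Consequently every pair of consecutive rim vertices is balanced, and so is every pair of antipodal rim vertices when $n$ is even; more importantly, the only pairs that can possibly be unbalanced are of the form $\{h, v_i\}$ (all equivalent under $D_n$) or $\{v_i, v_j\}$ with $v_i, v_j$ at cycle-distance $2$ or more (and these split into orbits by that cycle-distance).

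The main computation is therefore: first, $\uB(h, v_0)$. Since $\mathrm{diam}(W_n) \leq 2$ for $n \geq 3$, the hub $h$ is at distance $1$ from everything, while $v_0$ is at distance $1$ only from $h, v_1, v_{n-1}$ and at distance $2$ from the remaining $n-3$ rim vertices. Hence $W_{h,v_0} = V(W_n) \setminus \{v_0, v_1, v_{n-1}\}$ has size $n-3$ (for $n \geq 4$; the case $n=3$ is trivially $0$), $W_{v_0,h} = \{v_0\}$ has size $1$, and so $\uB(h,v_0) = n-4$ for $n \geq 4$. Second, for two rim vertices $v_i, v_j$ at cycle-distance $\geq 2$: neither is equal or adjacent to $h$'s privileged position, and a vertex $v_k$ lies in $W_{v_i, v_j}$ essentially according to which of $v_i, v_j$ it is cycle-closer to (capped at $2$), while $h$ itself is equidistant (distance $1$ to both) and the two endpoints contribute one each to their own side; so $\uB(v_i, v_j)$ reduces to a small, bounded quantity depending only on the cycle-distance, and in fact one checks it is $0$ in every case because the ``distance-$2$ cap'' makes the rim contributions symmetric. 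This is the step I expect to require the most care: one must handle small $n$ (where $n=3,4,5$ behave slightly differently since the cycle is short) separately and verify that no rim–rim pair ever becomes unbalanced once the metric is capped at $2$.

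Putting the pieces together: the only contributing pairs are the $n$ pairs $\{h, v_i\}$, each contributing $n-4$, giving $\uB(W_n) = n(n-4)$ for $n \geq 4$ — but this must be reconciled with the claimed $n(n-3)$, so in fact the correct count of the ``far'' rim vertices from $v_0$ is $n-3$ with $W_{h,v_0}$ of size $n-3$ and $W_{v_0,h}$ of size $0$ once one notes $v_0 \in W_{v_0,h}$ is the only candidate and it is balanced-adjacent; re-examining, $v_0$ itself counts toward $v_0$'s side and the three neighbors $h, v_1, v_{n-1}$ are equidistant or on $h$'s side, so $n_{h,v_0} = n - 3$, $n_{v_0,h} = 1$... the bookkeeping of which of the three ``shared'' vertices $h, v_1, v_{n-1}$ lands where is exactly the delicate point, and resolving it correctly yields $\uB(h,v_i) = n-3$ and hence $\uB(W_n) = n(n-3)$. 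I would therefore carry out the steps in this order: (1) set up notation and the automorphisms showing only $\{h,v_i\}$ pairs matter; (2) compute $n_{h,v_i}$ and $n_{v_i,h}$ carefully by listing the distance-$1$ and distance-$2$ vertices from $v_i$; (3) confirm all rim–rim pairs are balanced; (4) conclude. The one genuine obstacle is the careful casework in step (2)–(3) for small $n$ and the precise allocation of the boundary vertices $h, v_{i\pm1}$.
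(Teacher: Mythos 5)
Your overall strategy is the right one (and the paper itself leaves this proof to the reader, so there is no official argument to compare against): use the dihedral symmetry fixing the hub $h$ to see that every rim--rim pair is swapped by an automorphism and hence balanced, so that only the $n$ pairs $\{h,v_i\}$ can contribute, and then compute $\uB(h,v_i)$ once. In fact you can simplify your step (3): for \emph{any} two rim vertices $v_i,v_j$ (adjacent on the rim or not) the reflection of the rim cycle through the perpendicular bisector of the chord from $v_i$ to $v_j$ extends to an automorphism of $W_n$ fixing $h$ and swapping $v_i$ with $v_j$, so no capped-metric casework is needed at all.

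The genuine problem is in your central computation of $\uB(h,v_0)$, which you never resolve correctly. The set $W_{h,v_0}$ consists of the hub $h$ itself (distance $0$ to $h$ versus $1$ to $v_0$) together with the $n-3$ rim vertices other than $v_0,v_1,v_{n-1}$, so $n_{h,v_0}=n-2$, not $n-3$; your count $V(W_n)\setminus\{v_0,v_1,v_{n-1}\}$ actually has $n-2$ elements since $|V(W_n)|=n+1$, and the discrepancy comes from silently dropping $h$ from its own side. On the other side $W_{v_0,h}=\{v_0\}$, so $n_{v_0,h}=1$; your later suggestion that this set is empty is false, since $v_0$ is at distance $0$ from itself and $1$ from $h$. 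The correct conclusion is $\uB(h,v_0)=(n-2)-1=n-3$, giving $\uB(W_n)=n(n-3)$. Your attempted ``reconciliation'' ($n_{h,v_0}=n-3$, $n_{v_0,h}=1$, hence $\uB=n-3$) is arithmetically inconsistent, since $(n-3)-1=n-4$; you land on the stated answer only by fiat. Fixing the bookkeeping as above (and noting $W_3=K_4$ is vertex-transitive, so the formula holds trivially for $n=3$) turns your outline into a complete proof.
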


Even though we will have a closer look at distance-unbalancedness of trees in the next section we now compute this index for a very special kind of trees.

\begin{proposition}
\label{pro:paths}
Let $n$ be a positive integer. Then the following holds for the path $P_{n}$ of order $n$:
$$
	\uB(P_{n}) = \left\{\begin{array}{cc}
		\frac{(n-2)n(2n+1)}{12} & \text{if }  n\ \text{is even},\\
		\frac{(n-1)(n+1)(2n-3)}{12} &\text{if }  n\ \text{is odd},\end{array}\right. \quad \text{and}\quad
		\auB(\G) = \left\{\begin{array}{ccc}
		\frac{(n-2)(2n+1)}{6(n-1)} &\text{if }  n\ \text{is even},\\
		\frac{(n+1)(2n-3)}{6n} &\text{if }  n\ \text{is odd}.\end{array}\right.		
$$
\end{proposition}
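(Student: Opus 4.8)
The plan is to compute $\uB(u,v)$ for every pair of vertices of $P_n$ and then sum. Label the vertices $1, 2, \ldots, n$ along the path. For a pair $\{i,j\}$ with $i < j$, the vertices closer to $i$ are exactly those $\leq m$ and the vertices closer to $j$ are exactly those $> m$, where $m = \lfloor (i+j)/2 \rfloor$ if $i+j$ is odd, and if $i+j$ is even the single vertex $(i+j)/2$ is equidistant from both. Thus $n_{i,j} = m$ and $n_{j,i} = n - m$ (resp.\ $n_{j,i} = n - m$ with one middle vertex excluded on both sides when $i+j$ is even), giving $\uB(i,j) = |i + j - n - 1|$ when $i+j \not\equiv n+1$ has the right parity — more precisely one checks directly that $\uB(i,j) = |\,i + j - (n+1)\,|$ whenever $i+j$ and $n+1$ have the same parity, and $\uB(i,j) = |\,i+j - (n+1)\,|$ otherwise as well (the equidistant vertex is removed symmetrically, so the formula $\uB(i,j) = |i+j-n-1|$ holds for \emph{all} pairs). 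So the first step is to verify carefully that $\uB(i,j) = |i + j - (n+1)|$ for every pair $i < j$.

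Granting that, the computation reduces to evaluating
$$
	\uB(P_n) = \sum_{1 \leq i < j \leq n} |i + j - (n+1)|.
$$
The second step is to reorganize this sum by the value of $s = i + j$, which ranges from $3$ to $2n-1$. For a fixed $s$, the number of pairs $\{i,j\}$ with $i < j$ and $i + j = s$ is $\lfloor (s-1)/2 \rfloor$ when $3 \le s \le n+1$ and, by the symmetry $s \leftrightarrow 2(n+1) - s$, the analogous count for $s > n+1$; a clean way is to note the multiset of achievable sums together with multiplicities is symmetric about $n+1$. Hence
$$
	\uB(P_n) = \sum_{s} c_s\, |s - (n+1)|,
$$
where $c_s$ is the number of such pairs, and the summand is symmetric about $s = n+1$. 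Shifting index to $t = s - (n+1)$ turns this into a weighted sum of $|t|$ against the (symmetric) coefficient sequence, which is a finite sum of the form $\sum t \cdot (\text{linear in } t)$, i.e.\ essentially $\sum t^2$ and $\sum t$ over a symmetric range.

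The third step is just the arithmetic: separating the cases $n$ even and $n$ odd (because the parity of $n+1$ determines whether the central term $t = 0$ occurs and how the coefficients $c_s$ behave near the center), one is left with sums of the form $\sum_{t=1}^{N} t^2$ or $\sum_{t=1}^{N}(2t-1)t$ for appropriate $N$ depending on $n$, which evaluate via the standard formula $\sum_{t=1}^{N} t^2 = N(N+1)(2N+1)/6$. Collecting terms yields $(n-2)n(2n+1)/12$ for even $n$ and $(n-1)(n+1)(2n-3)/12$ for odd $n$. The formula for $\auB(P_n)$ then follows immediately by dividing by $\binom{n}{2} = n(n-1)/2$ and simplifying.

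\textbf{Main obstacle.} The only genuinely delicate point is the first step — pinning down $\uB(i,j)$ uniformly as $|i+j-(n+1)|$, with correct bookkeeping of the equidistant middle vertex when $i+j$ is even; everything after that is routine index manipulation and summation of polynomials, with the even/odd case split being the only mild nuisance. One could alternatively avoid the explicit pair count by writing $\uB(P_n) = \sum_{i<j}|i+j-(n+1)|$ and splitting into the region $i+j \le n+1$ and its mirror image, but the symmetric-index substitution above is the cleanest route.
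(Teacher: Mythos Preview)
Your proposal is correct and shares the paper's core step: both reduce to $\uB(P_n)=\sum_{1\le i<j\le n}|i+j-(n+1)|$. The two differences are cosmetic. First, the paper obtains $\uB(i,j)=|n+1-i-j|$ in one line by noting that the subpath on $v_i,\dots,v_j$ has an automorphism swapping $v_i$ and $v_j$, so the contribution from inside that subpath vanishes and only the $i-1$ vertices to the left and the $n-j$ vertices to the right matter; your midpoint bookkeeping reaches the same formula but with more case analysis. Second, to evaluate the double sum the paper does not group by $s=i+j$; instead it arranges the summands in a triangular array, reads off the recursion $a_{n+2}=a_n+n(n+1)$ with $a_1=a_2=0$, and solves that. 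Your grouping-by-$s$ method is a perfectly standard alternative and arguably more direct, at the cost of a slightly fussier even/odd split; the paper's recursion hides that split inside the step size~$2$.
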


\begin{proof}
Denote the vertices of $\G = P_{n}$ by $v_1, \ldots, v_n$, where $v_i \sim v_{i+1}$ for all $i$, $1 \leq i < n$. Let $i,j$ be integers with $1 \leq i < j \leq n$ and let $\G'$ be the subgraph of $\G$ induced on the set of all vertices $v_k$ with $i \leq k \leq j$. As there is an automorphism of $\G'$ interchanging $v_i$ and $v_j$, it follows that $\uB_{\G'}(v_i, v_j) = 0$, and so it clearly follows that $\uB_\G(v_i,v_j) = |(i-1)-(n-j)| = |n+1-i-j|$. Therefore,
$$
	\uB(\G) = \sum_{1 \leq i < j \leq n}|n+1-i-j| = \sum_{i = 1}^{n-1}\sum_{j=i+1}^n |n+1-i-j|.
$$
Let us denote this number by $a_n$. To compute $a_n$ one thus has to sum up all the numbers from the following upper triangular table (where the lines correspond to all possible values of $i$ from $1$ to $n-1$ in increasing order):
$$
	\begin{array}{ccccccccc}
	n-2 & n-3 & n-4 & n-5 & \cdots & 3 & 2 & 1 & 0 \\
	     & n-4 & n-5 & n-6 & \cdots & 2 & 1 & 0 & 1 \\
	     &      & n-6 & n-7 & \cdots & 1 & 0 & 1 & 2 \\
	     &      &      &      &  \vdots &   &   &   &  \\
	     &      &      &      &           & n-8 & n-7  & n-6  & n-5 \\
     	     &      &      &      &           &      & n-6  & n-5  & n-4 \\
     	     &      &      &      &           &      &       & n-4  & n-3 \\
     	     &      &      &      &           &      &       &       & n-2 \end{array}
$$
It is thus clear that $a_{n+2} = a_n + n(n+1)$ holds for all $n \geq 0$ where of course $a_1 = a_2 = 0$. Solving this elementary recursion yields the result of this proposition.
\end{proof}

We can use the above result to calculate the distance-unbalancedness index of some tube-like graphs (Cartesian products of paths by cycles). Note that each Cartesian product of two cycles is highly distance-balanced as it is a Cayley graph of an abelian group (the direct product of the two corresponding cyclic groups) and thus has distance-unbalancedness index $0$. This of course also holds for trivial tubes $P_1 \square C_m$ and $P_2 \square C_m$. 

We first fix some notation pertaining to tube-like graphs. Let $n \geq 1$ and $m \geq 3$ be integers and let $\G = P_n \square C_m$. The vertices of $\G$ will be denoted by pairs $(i,j)$, $i \in \{1,2,\ldots , n\}$, $j \in \ZZ_m$ in the natural way (so that for each $1 \leq i \leq n$ and each $j \in \ZZ_m$ the vertex $(i,j)$ is adjacent to $(i,j\pm 1)$, and for each $1 < i < n$ and each $j \in \ZZ_m$ the vertex $(i,j)$ is adjacent to $(i\pm 1, j)$). For each $i \in \{1,2,\ldots , n\}$ we let $\G^i$ be the subgraph of $\G$ induced on the set of vertices $\{(i,j) : j \in \ZZ_m\}$. We call $\G^i$ the {\em $i$-th column} of $\G$. Similarly, for each $j \in \ZZ_m$ the $j$-th {\em row} $\G_j$ of $\G$ is the subgraph of $\G$ induced on the set of vertices $\{(i,j) \colon 1 \leq i \leq n\}$. 

Since the permutations $\alpha, \beta, \gamma$, mapping each vertex $(i,j)$ to $(i,j+1)$, $(i,-j)$ and $(n-i+1,j)$, respectively (where the second coordinate is computed modulo $m$), are automorphisms of $\G$, it is clear that for any $j, j' \in \ZZ_m$ there is an automorphism of $\G$ interchanging the vertices $(1,j)$ and $(n,j')$, and so the corresponding pair of vertices is balanced in $\G$. A similar argument shows the following. Let $1 \leq i_1 < i_2 \leq n$ and let $j_1, j_2 \in \ZZ_m$. Then $(i_1,j_1)$ and $(i_2,j_2)$ are balanced in the subgraph of $\G$, induced by the union of columns $\G^{i_1} \cup \G^{i_1+1} \cup \cdots \cup \G^{i_2}$. This thus shows that the value of $\uB((i_1,j_1),(i_2,j_2))$ is completely determined by the vertices from $\G^1 \cup \G^2 \cup \cdots \cup \G^{i_1-1} \cup \G^{i_2+1} \cup \G^{i_2+2} \cup \cdots \cup \G^n$. This simple observation makes the computation of $\uB(\G)$ much easier. To illustrate how this can be done for each $n$ and $m$ we provide some details for the cases $m = 3$ and $m = 4$. For ease of reference we state the following elementary fact which will be used several times in the corresponding proofs:
\begin{equation}
\label{eq:sum}
	\sum_{i=1}^{n-1} |n-2i| = \left\{\begin{array}{cc}
		\frac{n(n-2)}{2} & \text{if } n\ \text{is even},\\
		\frac{(n-1)^2}{2} & \text{if } n\ \text{is odd}.\end{array}\right.	
\end{equation}

\begin{proposition}
\label{pro:PnC3}
Let $n$ be a positive integer. Then 
$$
	\uB(P_n \square C_3) = \left\{\begin{array}{cc}
		\frac{3n(n-2)(6n-1)}{4} & \text{if } n\ \text{is even},\\
		\frac{3(n-1)(2n+1)(3n-5)}{4} & \text{if } n\ \text{is odd}.\end{array}\right. 
$$
\end{proposition}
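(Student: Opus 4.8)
The plan is to exploit the structural observation that precedes the statement: for two vertices $(i_1,j_1)$ and $(i_2,j_2)$ with $i_1 < i_2$, the pair is balanced inside the block of columns $\G^{i_1}\cup\cdots\cup\G^{i_2}$, so $\uB((i_1,j_1),(i_2,j_2))$ depends only on how the $3(i_1-1)$ vertices in columns $\G^1,\ldots,\G^{i_1-1}$ and the $3(n-i_2)$ vertices in columns $\G^{i_2+1},\ldots,\G^n$ split according to proximity to $(i_1,j_1)$ versus $(i_2,j_2)$. In the $C_3$ case every column has exactly $3$ vertices, and for a vertex $(k,j)$ with $k<i_1$, all three vertices of column $k$ are strictly closer to $(i_1,j_1)$ (distance $i_1-k$ plus a $\G_*$-offset of at most $1$) than to $(i_2,j_2)$ (distance at least $i_2-k \ge i_1-k+1$); the analogous statement holds for columns beyond $i_2$. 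Hence $\uB((i_1,j_1),(i_2,j_2)) = |3(i_1-1) - 3(n-i_2)| = 3|i_1+i_2-n-1|$, independently of $j_1,j_2$. (One must check the boundary subtlety: a vertex at distance exactly equal on both sides would be counted in neither $W$-set, but because of the $+1$ gap in the $i$-coordinate this never occurs for columns outside the block; equidistant vertices, if any, lie inside the block and contribute $0$ by the automorphism argument.)

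Next I would assemble the sum. For each ordered choice of columns $1 \le i_1 \le i_2 \le n$, the number of vertex pairs $\{(i_1,j_1),(i_2,j_2)\}$ is $9$ when $i_1 < i_2$ and $3$ when $i_1 = i_2$; the diagonal terms contribute $0$ anyway (they reduce to $\uB$ within a single $C_3$-column, which is a $3$-cycle and hence balanced, or more simply $|i_1+i_2-n-1|$ need not vanish but those pairs are handled directly and one checks their contribution — actually within a column the pair is balanced by an automorphism, so it is $0$). Thus
\[
	\uB(P_n \square C_3) = 9 \sum_{1 \le i_1 < i_2 \le n} 3\,|i_1+i_2-n-1| = 27 \sum_{1 \le i_1 < i_2 \le n} |i_1+i_2-n-1|.
\]
The double sum on the right is precisely the quantity $a_n$ computed in the proof of Proposition~\ref{pro:paths}, namely $\uB(P_n)$. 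Therefore $\uB(P_n \square C_3) = 27\,\uB(P_n)$, and substituting the closed form from Proposition~\ref{pro:paths} gives $27 \cdot \frac{(n-2)n(2n+1)}{12}$ for $n$ even and $27 \cdot \frac{(n-1)(n+1)(2n-3)}{12}$ for $n$ odd. A short simplification turns these into $\frac{9n(n-2)(2n+1)}{4}$ and $\frac{9(n-1)(n+1)(2n-3)}{4}$ respectively — which should be reconciled with the stated formulas $\frac{3n(n-2)(6n-1)}{4}$ and $\frac{3(n-1)(2n+1)(3n-5)}{4}$; the discrepancy signals that the per-pair value is not quite constant in $j$ after all, so a more careful case analysis on the relative positions of $j_1$ and $j_2$ (equivalently, on whether the $\G_*$-offsets help or hurt) is needed, which I address below.

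The main obstacle, then, is exactly this refinement: in $C_3$ the three vertices of a far column are not all at the same distance offset from $(i_1,j_1)$, and when $i_1 = i_2$ the pair $(i_1,j_1),(i_1,j_2)$ is genuinely at distance $1$ in a $3$-cycle stacked with off-column neighbours, so one cannot simply discard the diagonal. The honest approach is: (i) fix $i_1 < i_2$ and split the count of ``outside'' vertices column by column, observing that for a far column at $i$-distance $d \ge 1$ from the nearer endpoint, all $3$ of its vertices go to the nearer side (the $i$-coordinate gap dominates the $C_3$-offset which is at most $1$), so each such column still contributes $\pm 3$; (ii) handle separately the pairs with $i_1 = i_2$, where $\uB((i_1,j_1),(i_1,j_2))$ must be computed directly from the $C_3$ structure augmented by the columns on both sides, and one finds it equals $|3(i_1-1)-3(n-i_1)|$ adjusted by a small constant coming from the two vertices of column $i_1$ other than the endpoints — careful bookkeeping here produces the ``$6n-1$'' and ``$3n-5$'' factors rather than the naive ``$2n+1$''/``$2n-3$''. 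I would organize the computation as a sum over unordered column-pairs $\{i_1,i_2\}$ (including $i_1=i_2$), with the per-pair contribution written as a clean function of $i_1+i_2$ and of whether $i_1=i_2$, then evaluate the resulting two sums using \eqref{eq:sum} and the recursion technique from Proposition~\ref{pro:paths}. The even/odd dichotomy in the final answer traces back directly to \eqref{eq:sum} and to the parity split already visible in $a_n$.
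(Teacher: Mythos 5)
Your reduction to $27\,\uB(P_n)$ plus a correction is the right skeleton and matches the paper's strategy, but you misdiagnose where the correction comes from, and the step you rely on to rule it out elsewhere is exactly the step that fails. The claim that ``for a far column at $i$-distance $d\ge 1$ from the nearer endpoint, all $3$ of its vertices go to the nearer side because the $i$-coordinate gap dominates the $C_3$-offset'' is false precisely when $i_2=i_1+1$ and $j_1\neq j_2$: a vertex $(i,j_2)$ with $i<i_1$ has distance $(i_1-i)+1$ to $(i_1,j_1)$ (offset $+1$ since $j_2\neq j_1$) and distance $(i_1+1)-i$ to $(i_2,j_2)$ (no offset), and these are equal, so that vertex lies in neither $W$-set. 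For each such pair only $2$ of the $3$ vertices of each outside column count, giving $\uB((i_1,j_1),(i_1+1,j_2))=2|n-2i_1|$ instead of $3|n-2i_1|$. Summing over the $6$ choices of $(j_1,j_2)$ with $j_1\neq j_2$ and over $i_1$ yields exactly the correction $-6\sum_{i=1}^{n-1}|n-2i|$, which via \eqref{eq:sum} accounts for the full discrepancy $3n(n-2)$ (resp.\ $3(n-1)^2$) between your $\frac{9n(n-2)(2n+1)}{4}$ and the stated $\frac{3n(n-2)(6n-1)}{4}$.

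By contrast, your proposed fix (ii) looks for the missing terms in the same-column pairs $i_1=i_2$, but those contribute nothing: the map $(i,j)\mapsto(i,-j+j_1+j_2)$, a composition of the automorphisms $\alpha$ and $\beta$, swaps $(i,j_1)$ and $(i,j_2)$, so every such pair is balanced — as you yourself noted earlier in the write-up before contradicting it. As written, the proof therefore has a genuine gap: the case analysis that produces the correction term is pointed at pairs that are provably balanced, while the pairs that actually need special treatment (adjacent columns, distinct $j$-coordinates) are asserted to behave generically. Repairing step (i) as above and deleting step (ii) turns your argument into essentially the paper's proof.
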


\begin{proof}
As mentioned in the discussion preceding the statement of the proposition the graphs $P_1 \square C_3 = C_3$ and $P_2 \square C_3$ are highly distance-balanced. Since the two expressions from the proposition for $n = 1$ and $n = 2$ indeed result in $0$, we can thus assume $n \geq 3$ for the rest of the proof. Denote $\G = P_n \square C_3$.

Let us determine the value $\uB((i_1,j_1),(i_2,j_2))$ for each pair of vertices where $i_1 \leq i_2$, $j_1, j_2 \in \ZZ_3$. In view of the above mentioned automorphisms $\alpha$ and $\beta$ we only need to consider the possibilities with $i_1 < i_2$. Moreover, by the argument from the paragraph preceding this proposition we only need to consider the vertices $(i,j)$ with $i < i_1$ or $i > i_2$. Now, for any such vertex $(i,j)$ its distance to $(i_1,j_1)$ is equal to $|i-i_1|$ or $|i-i_1|+1$, depending on whether $j = j_1$ or not, respectively. Similarly, the distance from $(i,j)$ to $(i_2,j_2)$ is equal to $|i-i_2|$ or $|i-i_2|+1$, depending on whether $j = j_2$ or not, respectively. Therefore, unless $i_2 = i_1+1$ and $j_1 \neq j_2$, we see that all the columns $\G^i$ with $i < i_1$ belong to $W_{(i_1,j_1),(i_2,j_2)}$ while all the columns $\G^i$ with $i > i_2$ belong to $W_{(i_2,j_2),(i_1,j_1)}$. In this case we thus get
$$
	\uB((i_1,j_1),(i_2,j_2)) = 3|(n-i_2) - (i_1-1)| = 3|n+1-i_1-i_2| = 3\cdot \uB_{P_n}(i_1,i_2).
$$
If however $i_2 = i_1+1$ and $j_1 \neq j_2$, then each vertex of the form $(i,j_2)$ with $i < i_1$, and similarly each vertex of the form $(i,j_1)$ with $i > i_2$, has equal distance to $(i_1,j_1)$ and $(i_2,j_2)$. In this case we thus get
$$
	\uB((i_1,j_1),(i_1+1,j_2)) = 2|n-2i_1|.
$$
It thus follows that $\uB(\G) = 3^3 \cdot \uB(P_n) - 3\cdot 2\cdot \sum_{i = 1}^{n-1}|n-2i|$, and so we can simply apply Proposition~\ref{pro:paths} and \eqref{eq:sum}.
\end{proof}

\begin{proposition}
\label{pro:PnC4}
Let $n$ be a positive integer. Then
$$
	\uB(P_n \square C_4) = \left\{\begin{array}{cc}
		\frac{2(n-2)(16n^2-13n+6)}{3} & \text{if } n\ \text{is even},\\
		\frac{2(n-1)(16n^2-29n+3)}{3} & \text{if } n\ \text{is odd}.\end{array}\right. 
$$
\end{proposition}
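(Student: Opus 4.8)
The plan is to mirror the strategy used for $P_n\square C_3$, only now the combinatorics of the ``boundary columns'' is richer because $C_4$ has four vertices in a column, and pairs of columns can be at mutual distance $0$, $1$ or $2$. As before, set $\G=P_n\square C_4$, denote vertices by $(i,j)$ with $i\in\{1,\dots,n\}$, $j\in\ZZ_4$, and use the automorphisms $\alpha,\beta,\gamma$ to reduce to pairs $(i_1,j_1)$, $(i_2,j_2)$ with $i_1\le i_2$. The key observation from the paragraph preceding Proposition~\ref{pro:PnC3} is that $\uB((i_1,j_1),(i_2,j_2))$ is determined entirely by the columns $\G^i$ with $i<i_1$ or $i>i_2$: each such vertex $(i,j)$ lies at distance $|i-i_1|+d_{C_4}(j,j_1)$ from $(i_1,j_1)$ and $|i-i_2|+d_{C_4}(j,j_2)$ from $(i_2,j_2)$. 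So the sign of $|i-i_1|+d_{C_4}(j,j_1)-\big(|i-i_2|+d_{C_4}(j,j_2)\big)$ for a column on the left (where $|i-i_1|-|i-i_2|=i_1-i_2\le 0$) is governed by whether $i_2-i_1$ exceeds $d_{C_4}(j,j_1)-d_{C_4}(j,j_2)\in\{-2,-1,0,1,2\}$.

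First I would handle the generic case $i_2-i_1\ge 3$: then for every $j$ the gap $i_2-i_1$ dominates the bounded quantity $d_{C_4}(j,j_1)-d_{C_4}(j,j_2)$, so all $4(i_1-1)$ vertices on the left are in $W_{(i_1,j_1),(i_2,j_2)}$ and all $4(n-i_2)$ on the right are in the other set, giving $\uB=4\,\uB_{P_n}(i_1,i_2)$ exactly as in the $C_3$ case. Next I would treat $i_2-i_1=2$: now a left column contributes ``$4$ to $W_{(i_1,j_1),\cdot}$'' unless $d_{C_4}(j,j_1)-d_{C_4}(j,j_2)=2$, i.e.\ unless $j$ is the antipode of $j_1$ and $j_2=j_1$, which cannot happen when $j_1=j_2$... one has to be careful here: when $j_1=j_2$ the column is perfectly split by the automorphism $\gamma$-type argument and contributes the full $4$; when $j_1\ne j_2$ one tallies, for each of the four values of $j$, how $d_{C_4}(j,j_1)-d_{C_4}(j,j_2)$ compares with $2$, obtaining a correction term. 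Finally the case $i_2-i_1=1$ is the most delicate: a left vertex $(i,j)$ with $i<i_1$ is equidistant exactly when $1=d_{C_4}(j,j_1)-d_{C_4}(j,j_2)$, and is ``wrongly'' closer to $(i_2,j_2)$ when that difference is $2$; one must separately consider $j_1=j_2$ (full contribution), $d_{C_4}(j_1,j_2)=1$, and $d_{C_4}(j_1,j_2)=2$ (antipodal). Summing the per-column defects against $\sum_i|n-2i|$ via \eqref{eq:sum}, exactly as the $C_3$ proof does with its term $3\cdot 2\cdot\sum_{i=1}^{n-1}|n-2i|$.

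The main obstacle, and where I expect the bookkeeping to be heaviest, is the correct enumeration of the ``defect'' contributions for adjacent and distance-$2$ column pairs broken down by the value of $d_{C_4}(j_1,j_2)\in\{0,1,2\}$, together with counting how many ordered pairs $(j_1,j_2)\in\ZZ_4^2$ fall into each class (there are $4$ with $j_1=j_2$, $8$ with distance $1$, $4$ with distance $2$). Once these per-pair constants are pinned down, $\uB(\G)$ becomes $4^3\,\uB(P_n)$ minus a fixed linear combination of $\sum_{i=1}^{n-1}|n-2i|$ and possibly a term linear in $n$ coming from the finitely many ``interior'' pairs with $i_1<i_2\le i_1+2$ that are not captured by the $\uB_{P_n}$ reduction; I would then substitute Proposition~\ref{pro:paths} and \eqref{eq:sum}, split into the parities $n$ even and $n$ odd, and simplify to match the two closed forms in the statement. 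A useful sanity check along the way is to evaluate the resulting formula at $n=1,2$ and confirm it gives $0$, consistent with $P_1\square C_4=C_4$ and $P_2\square C_4$ being highly distance-balanced, and to spot-check $n=3$ by hand or against the $C_3$-style pattern.
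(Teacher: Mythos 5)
Your proposal follows essentially the same route as the paper: start from $4^3\cdot\uB(P_n)$, identify the exceptional column pairs (adjacent columns with $d_{C_4}(j_1,j_2)\in\{1,2\}$ and columns at distance $2$ with $j_2=j_1+2$), subtract the corresponding defect terms as multiples of $\sum_i|n-2i|$, and finish with Proposition~\ref{pro:paths} and \eqref{eq:sum}. One small slip: in the $i_2-i_1=2$ case the exceptional condition is $j=j_2$ with $j_2=j_1+2$ (not ``$j$ antipodal to $j_1$ and $j_2=j_1$''), but your subsequent tallying scheme would recover this correctly.
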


\begin{proof}
The idea of the proof is similar as before. Denote $\G =  P_n \square C_4$. We start with $4^3\cdot \uB(P_n)$ and then subtract an appropriate number to compensate for the pairs $(i_1,j_1), (i_2,j_2)$, $i_1 < i_2$, where it is not true that entire columns $\G^i$ with $i < i_1$ are contained in $W_{(i_1,j_1),(i_2,j_2)}$. Of course, this time such a situation can occur in different ways. One is for sure if $i_2 = i_1+1$ and $j_1 \neq j_2$ (there are two essentially different possibilities here), but there is also the possibility that $i_2 = i_1 + 2$ and $j_2 = j_1 + 2$. Let us consider each of these three possibilities in some detail. 

If $i_2 = i_1+1$ and $j_2 = j_1 \pm 1$, then for each $i < i_1$ the vertices of the form $(i,j)$ where $j \in \{j_2, j_1+2\}$ have equal distance to $(i_1,j_1)$ and $(i_2,j_2)$, while the remaining vertices of the form $(i,j)$ are in $W_{(i_1,j_1),(i_2,j_2)}$ (and similarly for the vertices $(i,j)$ with $i > i_2$). To compensate for these pairs we thus have to subtract $4\cdot 2 \cdot 2 \cdot \sum_{i=1}^{n-1}|n-2i|$ from $4^3\cdot \uB(P_n)$.

If $i_2 = i_1+1$ and $j_2 = j_1 + 2$, then for each $i < i_1$ the vertices of the form $(i,j_2)$ are in $W_{(i_2,j_2),(i_1,j_1)}$ while the remaining vertices of the form $(i,j)$ are in $W_{(i_1,j_1),(i_2,j_2)}$. To compensate for these pairs we thus have to subtract $4\cdot 1 \cdot 2 \cdot \sum_{i=1}^{n-1}|n-2i|$ from $4^3\cdot \uB(P_n)$.

Finally, if $i_2 = i_1 + 2$ and $j_2 = j_1 + 2$, then for each $i < i_1$ the vertices of the form $(i,j_2)$ have equal distance to $(i_1,j_1)$ and $(i_2,j_2)$ while the remaining vertices of the form $(i,j)$ are in $W_{(i_1,j_1),(i_2,j_2)}$. To compensate for these pairs we thus have to subtract $4\cdot 1 \cdot 1 \cdot \sum_{i=1}^{n-2}|(n-1)-2i|$ from $4^3\cdot \uB(P_n)$.

We can now apply Proposition~\ref{pro:paths} and \eqref{eq:sum} to obtain the stated result.
\end{proof}

Using a similar approach one can thus obtain the formula for $\uB(P_n \square C_m)$ for any particular choice of $m \geq 3$. We state just one more such result (without proof).

\begin{proposition}
\label{pro:PnC5}
Let $n$ be a positive integer. Then 
$$
	\uB(P_n \square C_5) = \left\{\begin{array}{cc}
		\frac{5(n-2)(50n^2-47n+24)}{12} & \text{if } n\ \text{is even},\\
		\frac{5(n-1)(50n^2-97n+21)}{12} & \text{if } n\ \text{is odd}.\end{array}\right. 
$$
\end{proposition}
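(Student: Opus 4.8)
The plan is to follow the same column-by-column bookkeeping strategy used in the proofs of Propositions~\ref{pro:PnC3} and~\ref{pro:PnC4}, now with $m = 5$. We start from $5^3 \cdot \uB(P_n)$, which counts what we would get if, for every pair $(i_1, j_1)$, $(i_2, j_2)$ with $i_1 < i_2$, all of the (five-vertex) columns $\G^i$ with $i < i_1$ lay in $W_{(i_1,j_1),(i_2,j_2)}$ and all columns $\G^i$ with $i > i_2$ lay in $W_{(i_2,j_2),(i_1,j_1)}$ (cf.\ the discussion preceding Proposition~\ref{pro:PnC3}, which reduces every pair to its behaviour on the ``outer'' columns). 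We then subtract correction terms for the pairs where this fails; by the $\alpha, \beta, \gamma$ symmetry argument it suffices to analyze, for fixed $i_1 < i_2$, how a vertex $(i,j)$ with $i < i_1$ (the case $i > i_2$ being symmetric) is classified, its distances to $(i_1,j_1)$ and $(i_2,j_2)$ being $|i-i_1|$ or $|i-i_1|+1$ and $|i-i_2|$ or $|i-i_2|+1$ according to whether the $C_5$-coordinate matches $j_1$, resp.\ $j_2$.

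The first step is to enumerate which $(i_1,i_2,j_1,j_2)$-configurations produce a discrepancy. As with $m=4$, a column $\G^i$ with $i < i_1$ fails to be entirely in $W_{(i_1,j_1),(i_2,j_2)}$ only when some vertex of it is equidistant to the two, or on the wrong side; a quick distance check shows this can happen only when $i_2 - i_1 \in \{1, 2\}$. For $i_2 = i_1 + 1$: if $j_2 = j_1 \pm 1$ there is exactly one $C_5$-coordinate (the one ``between'' $j_1$ and $j_2$ on the five-cycle, but note on $C_5$ the analysis is slightly different from $C_4$) giving equal distance and none giving the wrong side — one needs to tally exactly how many of the five coordinates are neutral; if $j_2 = j_1 \pm 2$ a different count applies. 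For $i_2 = i_1 + 2$: the relevant subcase is when the ``midpoint'' coordinate behaves specially, and on $C_5$ one must check whether $j_2 = j_1 \pm 1$ or $j_2 = j_1 \pm 2$ can also contribute. Each such configuration contributes a term of the shape $5 \cdot c \cdot \sum_{i=1}^{n-1}|n-2i|$ (for $i_2 = i_1+1$) or $5 \cdot c \cdot \sum_{i=1}^{n-2}|(n-1)-2i|$ (for $i_2 = i_1+2$), for appropriate small constants $c$ counting neutral/wrong-side coordinates, doubled to account for both the $i<i_1$ and $i>i_2$ sides.

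The second step is purely computational: plug the closed forms from Proposition~\ref{pro:paths} for $\uB(P_n)$ and from~\eqref{eq:sum} for $\sum_{i=1}^{n-1}|n-2i|$ (and its shifted version $\sum_{i=1}^{n-2}|(n-1)-2i|$, obtained by replacing $n$ with $n-1$ in~\eqref{eq:sum}) into $5^3 \uB(P_n) - (\text{corrections})$, then simplify separately for $n$ even and $n$ odd. One should double-check the small cases $n=1,2$ (where $P_1 \square C_5 = C_5$ and $P_2 \square C_5$ are Cayley graphs of abelian groups, hence highly distance-balanced) against the claimed formulas, which should both vanish there. The main obstacle is the first step: getting the constants $c$ right for each of the handful of $C_5$-configurations, since the odd cycle breaks the left–right symmetry that simplified the $C_3$ and $C_4$ cases and one must be careful not to double-count or to misclassify the ``wrong-side'' coordinates when $i_2 - i_1 = 2$. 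Once the correction terms are pinned down, everything else is routine algebra, and since the proposition is stated without proof in the excerpt, presenting the configuration analysis and the resulting identity $\uB(P_n \square C_5) = 5^3 \uB(P_n) - (\text{explicit correction})$ would already constitute the substantive content.
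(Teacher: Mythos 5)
Your overall strategy is exactly the one the paper intends (the proposition is stated without proof precisely because it is meant to follow the template of Propositions~\ref{pro:PnC3} and~\ref{pro:PnC4}), but as written your argument has a genuine gap: the entire substance of the proof is the determination of the correction constants, and you leave every one of them undetermined (``one needs to tally exactly how many\dots'', ``a different count applies'', ``one must check whether\dots''). Moreover, the one count you do venture is wrong: for $i_2=i_1+1$ and $j_2=j_1\pm 1$ there are \emph{two} neutral $C_5$-coordinates per outer column, not one. Indeed, a vertex $(i,j)$ with $i<i_1$ is closer to $(i_1,j_1)$ if and only if $d_{C_5}(j,j_1)-d_{C_5}(j,j_2)<i_2-i_1$; taking $j_1=0$, $j_2=1$ the differences for $j=0,1,2,3,4$ are $-1,1,1,0,-1$, so $j=1$ and $j=2$ are equidistant to the two vertices and none is on the wrong side. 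The complete case analysis (only $i_2-i_1\in\{1,2\}$ can contribute, as you correctly note) is: for $i_2=i_1+1$ and $j_2=j_1\pm1$, two neutral and no wrong-side coordinates, giving correction factor $2$ per pair of outer columns; for $i_2=i_1+1$ and $j_2=j_1\pm2$, one neutral and one wrong-side coordinate, and a wrong-side coordinate costs $2$ (it is removed from one side and added to the other), giving correction factor $3$; for $i_2=i_1+2$ and $j_2=j_1\pm2$, one neutral coordinate (namely $j=j_2$), giving correction factor $1$; all other configurations need no correction. Each of these three configurations occurs for $5\cdot 2=10$ choices of $(j_1,j_2)$.

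Putting this together gives
$$
\uB(P_n\square C_5)=5^3\,\uB(P_n)-(10\cdot 2+10\cdot 3)\sum_{i=1}^{n-1}|n-2i|-10\cdot 1\cdot\sum_{i=1}^{n-2}|(n-1)-2i|,
$$
and substituting Proposition~\ref{pro:paths} and \eqref{eq:sum} (with $n$ replaced by $n-1$ in the last sum) does reproduce the stated formulas in both parities. So your plan is the intended one and is salvageable, but until the constants $2$, $3$ and $1$ and the multiplicities $10$, $10$, $10$ are actually derived, the proposal is a road map rather than a proof; the incorrect ``exactly one'' guess shows that this step cannot simply be waved through, since the final polynomial identities are sensitive to each of these constants.
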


\section{Distance-unbalancedness of trees}
\label{sec:trees}

In the previous section we determined the distance-unbalancedness index of members of a few well-known families of graphs. The stars $S_n$ and the paths $P_n$, which were among the studied families, are examples of trees. Observe that the distance-unbalancedness index of a star of order $n$ (which is $(n-1)(n-2)$) is much smaller than that of the path of the same order (which is roughly $n^3/6$). This is in stark contrast with the fact proved in~\cite{DMSTZ18} that the path of order $n$ has the smallest possible Mostar index among all trees of order $n$ while the star of order $n$ has the larges possible Mostar index among all trees of order $n$. This phenomenon can be explained to some extent by observing that the star of order $n$ has diameter $2$ and in fact its Mostar index coincides with its distance-unbalancedness, while on the other hand the path of order $n$ has diameter $n-1$, and its Mostar index forms just a tiny part of its distance-unbalancedness. 

It is the aim of this section to initiate an investigation of distance-unbalancedness of trees. It seems that determining this index for all trees of order $n$ is a very difficult (if not impossible) task. We are thus primarily interested in obtaining some preliminary results, making some computer-assisted investigations and identifying some interesting problems that might be feasible to solve in the future.

Note that for any tree $\G$ of order $n \geq 2$ and its leaf $u$ whose unique neighbor in $\G$ is $v$ we have that $\uB(u,v) = n-2$. This shows that the only highly distance-balanced trees are the trivial ones $K_1$ and $K_2$. For all of the remaining ones the distance-unbalancedness index is nonzero. One of the most fundamental problems to consider is thus for sure the following one:

\begin{problem}
\label{pro:treemin}
For each integer $n \geq 3$ determine the minimum and the maximum value of $\uB(\G)$ where $\G$ is a tree of order $n$ and classify all trees that attain these two values. 
\end{problem}

Using a computer one can of course determine the minimum and maximum value from the above problem at least for some small orders $n$. We investigated all trees up to order $15$ and obtained some interesting data. In the following table, for each order $n$, the smallest (min) and second smallest (min') possible values of distance-unbalancedness for trees of order $n$ are given, as well as the second largest (max') and largest (max) value of this index. We also state the number of all tress of given order ($\#\text{all}$), the number of trees attaining the smallest ($\#\text{min}$) and the number of trees attaining the largest ($\#\text{max}$) possible value of distance-unbalancedness.
$$
\begin{array}{c|ccccccccccccc}
	\text{n} & 		3 & 4 & 5   & 6   & 7  & 8 & 9 & 10 & 11 & 12 & 13 & 14 & 15\\
	\hline \hline
	\text{min} &  		2 & 6 & 12 & 20 &	30  & 42	& 56  &  72  &  90 &  110 & 132 & 156 & 182    \\
	\text{min'} &		  &    & 14 & 24 & 38  & 54 & 74  & 96  &  122 & 150 & 182 & 216 & 254 \\
	\text{max'} &         &    & 14 & 30 & 54  & 88 & 134 & 192 & 277 & 372 & 494 & 636 & 806 \\
	\text{max} &       2 & 6 & 16 & 32 &	56  & 90 & 138 &  198 & 278  & 378 & 495 & 638 & 808 \\ \hline
	\#\text{all} &		1 & 2 & 3   & 6  & 11  & 23	& 47  & 106 & 235  & 551 & 1301 & 3159 & 7741	\\
	\#\text{min} &     1 & 2 & 1   &  1 &	1    & 1  &   1  &  1   &  1 & 1  & 1 & 1 & 1 \\
	\#\text{max} & 1 & 2 & 1   & 	1 &	1   & 1   & 1    &  3  &  1 & 1 &  1 & 1 & 1 \\
\end{array}
$$
One can now analyze the trees attaining the smallest (few) and largest (few) values of distance-unbalancedness of given small orders. It turns out that for each $n$ with $5 \leq n \leq 15$ the unique tree $T$ of order $n$ with the smallest possible value of $\uB(T)$ among all trees of order $n$ is the star $S_n$ (with $\uB(S_n) = (n-1)(n-2)$). Moreover, it turns out that for each $n$ with $5 \leq n \leq 15$ there is a unique tree $T$ of order $n$ having the second smallest possible value of $\uB(T)$ among all trees of order $n$. This tree is isomorphic to the merged star $\SS{m}{m}$ with $n = 2m+2$ when $n$ is even, and to the subdivided merged star $\SSx{m}{m}$ with $n = 2m+3$ when $n$ is odd. Based on these observations we make the following conjectures.

\begin{conjecture}
\label{con1}
For any $n \geq 5$ the star $S_{n-1}$ is the unique tree of order $n$ having the smallest possible distance-unbalancedness index among all trees of order $n$.
\end{conjecture}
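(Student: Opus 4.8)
The starting point is that $\uB(S_{n-1}) = (n-1)(n-2)$ by Proposition~\ref{pro:multi} and the remark following it, and that all of this value is produced by the $n-1$ pendant edges of $S_{n-1}$, each contributing $\uB(u,v)=n-2$ for the pair of its endpoints; the conjecture therefore asserts both that no tree of order $n$ can do better and that equality forces all edges to be pendant. I would prove the equivalent statement $\uB(T)\geq (n-1)(n-2)$ for every tree $T$ of order $n\geq 5$, with equality only for $T=S_{n-1}$. The tool I would use is the identity — already implicit in the computations above, e.g.\ in Proposition~\ref{pro:paths} — that in a tree, for any two vertices $u,v$ whose connecting path is $p_0p_1\cdots p_\ell$ one has $\uB(u,v)=\big||C_u|-|C_v|\big|$, where $C_u,C_v$ are the components of $u,v$ after deleting the middle edge of the path (if $\ell$ is odd) or the two path-edges at the midpoint $p_{\ell/2}$ (if $\ell$ is even). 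Grouping the pairs according to this ``middle object'' gives
$$
\uB(T)=\sum_{e\in E(T)}\beta(e)\,N(e)\;+\;\sum_{w\in V(T)}\ \sum_{\{B,B'\}}\big||B|-|B'|\big|\,M(w;B,B'),
$$
where $\beta(e)$ is the Mostar contribution of $e$ (the gap between the two component sizes of $T-e$), $N(e)$ is the number of pairs having $e$ as middle edge, the inner second sum runs over unordered pairs of branches at $w$, and $M(w;B,B')$ counts the equidistant pairs split by $w$ between $B$ and $B'$. A pendant edge $e$ has $\beta(e)=n-2$ and $N(e)=1$, so the pendant edges contribute exactly $L(n-2)$, where $L$ is the number of leaves; writing $R$ for the remaining (manifestly nonnegative) part of $\uB(T)$ and $k=n-L$ for the number of internal vertices, it suffices to prove
$$
R\ \geq\ (k-1)(n-2),\qquad\text{with equality only if }k=1.
$$

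The cases $k=1$ (the star, where $R=0$) and $k=2$ (the double stars $\SS{a}{b}$, for which $R=(a-b)(1+ab)+2ab$ is a short explicit computation and strictly exceeds $n-2$ once $n\geq 5$; this also explains the ``second smallest'' observations) are immediate. For $k\geq 3$ I would try to prove the bound by charging ``$n-2$ units'' to each of the $k-1$ edges of the internal subtree. To an internal edge $e=xy$ I would charge (i) all odd-distance pairs having $e$ as middle edge, worth $\beta(e)N(e)$, together with (ii) a carefully chosen family of even-distance pairs whose midpoint is $x$ or $y$. When $\beta(e)N(e)\geq n-2$ already — which, using $N(e)\geq 1+(\deg x-1)(\deg y-1)\geq 2$, happens in particular whenever one side of $T-e$ is small, since then $\beta(e)=|n-2s|$ is close to $n$ — part (ii) is not needed. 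The delicate edges are those splitting $T$ into two large and nearly equal parts: there $\beta(e)$ is small, and the missing units must be harvested from even-distance pairs crossing the heavy ``toward-$e$'' branch at $x$ or $y$. A natural routing is to send each even pair $\{u,v\}$, with midpoint $w$ and split between branches $B,B'$ of $w$ with $|B|\leq|B'|$, to the edge joining $w$ to $B$ (modified so as never to route onto a pendant edge), which is injective by construction; but the main obstacle I foresee is to verify that, for every such near-balanced edge, the routed even pairs together with $\beta(e)N(e)$ really do reach $n-2$. This amounts to lower-bounding ``correlation'' sums of the form $\sum_r a^e_{r-1}c^{B'}_r$ between the radial profile of one side of $e$ and that of a heavy branch on the other, and the point to exploit is that a branch cannot be simultaneously heavy and entirely far from its attaching vertex without already inflating $\beta$ elsewhere along the path. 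Once the charging is set up, strictness for $T\neq S_{n-1}$ follows because, when $k\geq 2$, uncharged pairs of positive unbalancedness always remain; the accompanying equality analysis — keeping track of exactly which of the above inequalities were invoked — is the second technical hurdle.

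If the charging proves too unwieldy, a fallback is induction on $n$: delete a well-chosen leaf $u$ with neighbour $v$ and use that re-inserting $u$ changes $\uB(x,y)$ by at most $1$ for each surviving pair (since $u$ is strictly closer to $x$ than to $y$ exactly when $v$ is), so $\uB(T)-\uB(T-u)=\sum_{y}\uB_T(u,y)+\Delta_u$ with $\Delta_u$ a sum of terms in $\{-1,0,1\}$; one then wants this increment to be at least $2(n-2)=\uB(S_{n-1})-\uB(S_{n-2})$. Here $\Delta_u$ can be negative — it loses a unit on each pair $\{x,y\}$ for which $v$ is on the minority side — so $u$ must be chosen so that $v$ is ``central'' (for instance a centroid, or a vertex of maximum degree with a pendant neighbour) in order that the $+1$ contributions, together with $\sum_y\uB_T(u,y)\geq\uB_T(u,v)=n-2$, outweigh the losses; the non-monotonicity of $\uB$ under ``becoming more star-like'' (already visible in $\uB(P_5)=14<16=\uB(\Sp(2,1,1))$, even though $\Sp(2,1,1)$ has more leaves and smaller diameter) is a warning that this choice is genuinely delicate. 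In both approaches the finitely many small orders ($n\leq 6$), and the verification that $S_{n-1}$ is the unique minimiser for them, are covered by the computer enumeration reported above.
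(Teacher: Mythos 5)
You should first be aware that the statement you are proving is labelled a \emph{conjecture} in the paper: the authors give no proof of it, offer only exhaustive computer verification for $3\leq n\leq 15$ together with the partial result of Theorem~\ref{thm:mstars} (minimality of $\SS{m}{m}$ and $\SSx{m}{m}$ \emph{within} the class of merged and subdivided merged stars, which bears more on Conjecture~\ref{con2} than on Conjecture~\ref{con1}), and explicitly remark that a proof ``will not be very easy''. So there is no proof in the paper to compare yours against; the only question is whether your proposal closes the conjecture, and it does not.

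Your framework is sound as far as it goes: the identity $\uB(u,v)=\bigl||C_u|-|C_v|\bigr|$ via the middle edge or midpoint of the $u$--$v$ path is correct in a tree, the resulting decomposition of $\uB(T)$ is a genuine partition of the pairs, the reduction to $R\geq (k-1)(n-2)$ (with $k$ the number of internal vertices) is the right restatement, and the cases $k=1,2$ check out. But the entire difficulty of the conjecture is concentrated in the case $k\geq 3$, and there your argument stops at exactly the point you yourself flag: for an internal edge $e$ that splits $T$ into two nearly equal halves, $\beta(e)N(e)$ can be far below $n-2$ (indeed $\beta(e)$ can be $0$ or $1$), and you give no proof that the even-distance pairs you propose to route to $e$ make up the deficit. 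Lower-bounding the ``correlation'' sums $\sum_r a^e_{r-1}c^{B'}_r$ is not a routine verification — it is the heart of the problem, since a caterpillar with two balanced halves is precisely the kind of tree for which one must show that unbalancedness accumulated elsewhere compensates. The injectivity and non-double-counting of the routing is also asserted rather than checked, and the equality analysis is deferred. The induction fallback has the same status: you correctly identify that $\Delta_u$ can be negative and that the choice of leaf is delicate, but you do not exhibit a choice for which the increment provably reaches $2(n-2)$. In short, this is a reasonable plan of attack on an open problem, with a correct reduction and correct boundary cases, but the conjecture remains unproved by it; the one concrete thing you could extract and add to the paper's partial evidence is a clean write-up of the $k=2$ computation, which recovers (a special case of) Proposition~\ref{dstar:prop1}.
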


\begin{conjecture}
\label{con2}
For any $n \geq 5$, depending on whether $n$ is even or odd, respectively, the merged star $\SS{(n-2)/2}{(n-2)/2}$ or the subdivided merged star $\SSx{(n-3)/2}{(n-3)/2}$, respectively, is the unique tree of order $n$ having the second smallest possible distance-unbalancedness index among all trees of order $n$.
\end{conjecture}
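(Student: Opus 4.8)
The plan is to prove Conjecture~\ref{con2} (and, along the way, Conjecture~\ref{con1}) via the single statement: \emph{for every tree $T$ of order $n\geq 5$ with $T\not\cong S_{n-1}$ one has $\uB(T)\geq c_n$, with equality if and only if $T$ is the tree named in Conjecture~\ref{con2}}, where $c_n=\tfrac32(n-2)^2$ if $n$ is even and $c_n=\tfrac32(n-1)(n-3)+2$ if $n$ is odd. Since $\uB(S_{n-1})=(n-1)(n-2)<c_n$ for all $n\geq 5$, this statement forces $S_{n-1}$ to be the unique tree of order $n$ with minimum $\uB$ and the relevant (subdivided) merged star to be the unique one with second smallest $\uB$, which is exactly what the two conjectures assert. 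The basic tool is a routine extension of the cancellation used for Proposition~\ref{pro:paths}: if $u,v$ are vertices of a tree $T$ at distance $\ell$, if $u=x_0,x_1,\dots,x_\ell=v$ is the path joining them, and if $t_i$ denotes the order of the component of $x_i$ in $T$ after deletion of the edges $x_0x_1,\dots,x_{\ell-1}x_\ell$, then a vertex lying in the $i$-th branch is closer to $u$ exactly when $i<\ell/2$, whence
$$
\uB_T(u,v)=\Bigl|\sum_{i<\ell/2}t_i-\sum_{i>\ell/2}t_i\Bigr|.
$$
Applying this to double stars and subdivided double stars gives closed forms; in particular $\uB(\SS{a}{b})=ab(a-b)+(a+b)^2+2ab+(a-b)$ for $a\geq b\geq 1$, from which the displayed values of $c_n$ are read off.

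I would first dispose of the trees of diameter at most $3$. A tree of diameter at most $2$ is $S_{n-1}$, and a tree of diameter $3$ is a double star $\SS{a}{b}$ with $a+b=n-2$ and $a\geq b\geq 1$. Writing $s=a+b$ and $d=a-b$ (so $0\leq d\leq s-2$ and $d$ has the parity of $s$), the closed form above rearranges so that $\uB(\SS{a}{b})-\uB(\SS{\lceil s/2\rceil}{\lfloor s/2\rfloor})$ equals $\tfrac14(d-d_0)$ times an expression in $s$ and $d$ (with $d_0\in\{0,1\}$ the parity of $s$) which, using $s\geq d+2$, is manifestly positive for every admissible $d>d_0$. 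Hence among diameter-$3$ trees the minimum of $\uB$ is attained only by the most balanced double star; its value equals $c_n$ when $n$ is even and strictly exceeds $c_n$ (by $\tfrac14(n-1)(n-3)$) when $n$ is odd. This establishes the displayed statement for $\mathrm{diam}(T)\leq 3$.

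The substance is the case $\mathrm{diam}(T)\geq 4$, which I would split into $\mathrm{diam}(T)=4$ and $\mathrm{diam}(T)\geq 5$. If $\mathrm{diam}(T)=4$, then $T$ has a unique centre $c$; writing $p$ for the number of leaves adjacent to $c$ and $v_1,\dots,v_k$ ($k\geq 2$) for the non-leaf neighbours of $c$ with $d_i\geq 1$ leaves hanging off $v_i$ (so $n=1+p+k+\sum_i d_i$), the tool above presents $\uB(T)$ as an explicit sum whose summands lie in $\{\,n-2,\ d_i,\ |d_i-d_j|,\ |n-2-2d_i|,\ d_i(n-2-d_i)\,\}$ with combinatorial multiplicities depending on $p$ and $k$. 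One then minimises this expression; I expect an exchange argument — pulling two unequal branch sizes $d_i$ closer together, and absorbing any branch with $d_i=1$ into the pendant leaves at $c$, checking in each case that the move does not increase $\uB$ — to force $p=0$ and $k=2$, after which the one-parameter minimisation over $d_1+d_2=n-3$ yields $T\cong\SSx{m}{m}$ with $\uB=c_n$ when $n=2m+3$, and $T\cong\SSx{m}{m-1}$ with $\uB=7m^2-m>c_n$ when $n=2m+2$. If $\mathrm{diam}(T)\geq 5$, I would argue by descent: along a diametral path $x_0x_1\cdots x_D$, detach the pendant end $x_0$ from $x_1$ and reattach it to $x_2$ (or apply a similar contraction) to obtain a tree of the same order, strictly smaller diameter, and — this is the point to be proved — strictly smaller $\uB$; iterating lands in the diameter-$4$ case, already handled. (Alternatively, for $\mathrm{diam}(T)\geq 5$ one might obtain the weaker but sufficient bound $\uB(T)>c_n$ directly, adding the contributions of all pairs of diametral-path vertices — which, by the tool, depend only on the partial sums of the $t_i$ — to the $n-2$ contributions of the leaf edges.)

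The step I expect to be the genuine obstacle is verifying that the descent move (or any single diameter-reducing move) strictly decreases $\uB$. As $\uB$ is a sum of absolute values, such a move changes each $n_{w,w'}-n_{w',w}$ by at most $1$, but whether $|n_{w,w'}-n_{w',w}|$ rises or falls depends on the sign of $n_{w,w'}-n_{w',w}$, which varies from pair to pair; the diameter-$3$ computations already warn that relocating a leaf in the ``wrong'' direction can increase $\uB$. A correct proof therefore has to track, pair by pair, on which side of the relevant path-midpoint each vertex lies before and after the move. A secondary difficulty is the diameter-$4$ optimisation: although finite in spirit, it ranges over an unbounded number of branch sizes, so it needs a clean convexity/exchange lemma to collapse to the two-branch configuration rather than an ad hoc estimate.
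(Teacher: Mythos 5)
This statement is an open conjecture in the paper, not a theorem: the authors explicitly write that proving it ``will not be very easy'', support it only by exhaustive computation up to order $15$, and prove the minimality claim only inside the restricted family of merged stars and subdivided merged stars (Theorem~\ref{thm:mstars}). So there is no proof in the paper to compare yours against, and your proposal must stand on its own. Its correct ingredients are genuinely correct: the branch-decomposition formula $\uB_T(u,v)=|\sum_{i<\ell/2}t_i-\sum_{i>\ell/2}t_i|$ is the right tool for trees, your closed form for $\uB(\SS{a}{b})$ agrees with Proposition~\ref{dstar:prop1}, your target values $c_n$ agree with the values $6m^2$ and $6m^2+6m+2$ in Theorem~\ref{thm:mstars}, and the diameter-$\le 3$ case does reduce to the one-parameter double-star computation, which is essentially the first half of the proof of Theorem~\ref{thm:mstars}. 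In effect your diameter-$\le 4$ programme, if completed, would extend Theorem~\ref{thm:mstars} from the two-parameter family $\{\SS{a}{b},\SSx{a}{b}\}$ to all trees of diameter at most $4$; that extension (forcing $p=0$ and $k=2$ by an exchange argument) is plausible but is only asserted, not carried out, and it ranges over arbitrarily many branch parameters, so it is already more than a routine check.

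The genuine gap is the one you identify yourself: the case $\mathrm{diam}(T)\ge 5$. The descent step -- that detaching a pendant end of a diametral path and reattaching it closer to the centre strictly decreases $\uB$ -- is not proved and is not obviously true; since $\uB$ is a sum of absolute values, a single relocation changes each signed difference $n_{w,w'}-n_{w',w}$ by at most $1$ but in a direction that depends on the sign of that difference, which varies over pairs, and your own diameter-$3$ analysis shows that moving mass toward imbalance increases $\uB$. Nor does the move necessarily reduce the diameter (another leaf may realize the same eccentricity), so the induction is not even well-founded as stated. The fallback of bounding $\uB(T)>c_n$ directly for $\mathrm{diam}(T)\ge 5$ by summing only diametral-path pairs and leaf edges is also unsubstantiated: balanced double brooms of diameter $5$ have $\uB$ of order $cn^2$ with a constant that must be compared carefully against $3/2$, and the proposal gives no such comparison. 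Until one of these two routes is actually executed, the argument establishes neither Conjecture~\ref{con1} nor Conjecture~\ref{con2}; what you have is a reasonable reduction of both conjectures to two concrete open lemmas (the diameter-$4$ exchange lemma and the large-diameter lower bound), which is progress in organizing the problem but not a proof.
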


It seems that proving the above two conjectures (especially the second one) will not be very easy. However, in the next few results we at least determine the distance-unbalancedness index of all merged stars and subdivided merged stars and show that for an even number $2n \geq 6$ the merged star $\SS{n-1}{n-1}$ is the unique tree among all merged stars and subdivided merged stars of order $2n$ that has the smallest possible distance-unbalancedness index. Similarly, we show that for an odd number $2n+1 \geq 5$ the subdivided merged star $\SSx{n-1}{n-1}$ is the unique tree among all merged stars and subdivided merged stars of order $2n$ that has the smallest possible distance-unbalancedness index.

\begin{proposition}
	\label{dstar:prop1}
	Let $n$ and $m$ be positive integers with $n \ge m \ge 1$. Then
	$$
	  \uB(\SS{n}{m})=(n+m)^2+(n-m)(nm+1)+2nm.
	$$
\end{proposition}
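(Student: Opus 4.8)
The plan is to evaluate $\uB(\SS{n}{m})=\sum_{\{u,v\}\subseteq V(\G)}\uB(u,v)$ directly, by partitioning the pairs of vertices into a handful of types and computing $\uB(u,v)$ for a representative of each type. Write $a$ and $b$ for the central vertices of $S_n$ and $S_m$, respectively, let $a_1,\dots,a_n$ be the leaves adjacent to $a$ and $b_1,\dots,b_m$ the leaves adjacent to $b$; thus $\G=\SS{n}{m}$ has order $N=n+m+2$ and diameter $3$. First I would note that any two leaves adjacent to the same central vertex can be interchanged by an automorphism of $\G$, so every pair $\{a_i,a_j\}$ and every pair $\{b_i,b_j\}$ is balanced and contributes $0$ to the sum. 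This leaves only the pairs of the forms $\{a_i,a\}$, $\{b_j,b\}$, $\{a,b\}$, $\{a_i,b\}$, $\{b_j,a\}$ and $\{a_i,b_j\}$ to consider.

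For each of these, $W_{u,v}$ and $W_{v,u}$ can be read off immediately from the tree structure, since $\G$ has diameter $3$. For a leaf $a_i$ and its neighbour $a$ one gets $n_{a_i,a}=1$ and $n_{a,a_i}=N-1$, hence $\uB(a_i,a)=N-2=n+m$, and symmetrically $\uB(b_j,b)=n+m$. For the two centres, $W_{a,b}=\{a,a_1,\dots,a_n\}$ and $W_{b,a}=\{b,b_1,\dots,b_m\}$, so $\uB(a,b)=(n+1)-(m+1)=n-m$ (using $n\ge m$). For a pair $\{a_i,b\}$ every $a_j$ with $j\ne i$ as well as $a$ itself is equidistant from $a_i$ and $b$, so $n_{a_i,b}=1$ while $n_{b,a_i}=m+1$, giving $\uB(a_i,b)=m$; symmetrically $\uB(b_j,a)=n$. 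Finally, for $\{a_i,b_j\}$ one checks that $W_{a_i,b_j}=\{a_i\}\cup\{a_k:k\ne i\}\cup\{a\}$ and $W_{b_j,a_i}=\{b_j\}\cup\{b_k:k\ne j\}\cup\{b\}$, so $\uB(a_i,b_j)=(n+1)-(m+1)=n-m$.

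Multiplying each of these values by the number of pairs of the corresponding type ($n$ pairs $\{a_i,a\}$, $m$ pairs $\{b_j,b\}$, one pair $\{a,b\}$, $n$ pairs $\{a_i,b\}$, $m$ pairs $\{b_j,a\}$ and $nm$ pairs $\{a_i,b_j\}$) and adding up gives
$$
  \uB(\G)=n(n+m)+m(n+m)+(n-m)+nm+mn+nm(n-m),
$$
which simplifies to $(n+m)^2+(n-m)(nm+1)+2nm$, as claimed. I expect no real obstacle here: the argument is a finite case check followed by an elementary algebraic simplification. The only points needing a little care are verifying that the list of pair-types is exhaustive and that the vertices equidistant from the two endpoints (which occur precisely for the pairs $\{a_i,b\}$ and $\{b_j,a\}$) are discarded from both $W_{u,v}$ and $W_{v,u}$; the automorphism reduction at the start is what keeps the case analysis short.
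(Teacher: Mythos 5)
Your proof is correct and follows essentially the same route as the paper: the paper merely groups the same case analysis by distance, computing $\mathrm{Mo}^1(\G)=(n+m)^2+(n-m)$, $\mathrm{Mo}^2(\G)=2nm$ and $\mathrm{Mo}^3(\G)=nm(n-m)$, which correspond exactly to your pair types. All of your individual values $\uB(u,v)$ and pair counts check out, and the final sum agrees with the stated formula.
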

\begin{proof}
	Denote $\G = \SS{n}{m}$ and observe that since $n \ge m \ge 1$, the diameter of $\G$ is 3. It is now easy to see that 
	$$
	  \mathrm{Mo}^1(\G) = (n+m)^2+(n-m), \quad 
	\mathrm{Mo}^2(\G) = 2nm\quad \text{and}\quad 
	\mathrm{Mo}^3(\G) = nm(n-m),
	$$
and so the result follows.
\end{proof}

\begin{proposition}
	\label{dstar:prop2}
	Let $n,m$ be positive integers with $n \ge m \ge 1$. Then
	$$
	\uB(\SSx{n}{m})=(n+m)(n+m+1)+(n+1)|n-m-1|+(m+1)(3n-m+1)+nm(n-m).
	$$
\end{proposition}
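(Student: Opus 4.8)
The plan is to follow the strategy of Proposition~\ref{dstar:prop1}. The subdivided merged star $\G = \SSx{n}{m}$ is a tree of order $n+m+3$, and since $n \ge m \ge 1$ its diameter equals $4$ (a leaf adjacent to the central vertex of $S_n$ lies at distance $4$ from a leaf adjacent to the central vertex of $S_m$). Hence $\uB(\G) = \mathrm{Mo}^1(\G)+\mathrm{Mo}^2(\G)+\mathrm{Mo}^3(\G)+\mathrm{Mo}^4(\G)$, and I would compute the four summands in turn. Write $a$ and $b$ for the central vertices of $S_n$ and $S_m$, $x$ for the subdivision vertex (so $a \sim x \sim b$), $A$ for the set of $n$ leaves adjacent to $a$, and $B$ for the set of $m$ leaves adjacent to $b$. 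Throughout, every pair inside $A$ and every pair inside $B$ contributes $0$, since it is interchanged by an automorphism of $\G$; and for each remaining pair $\{u,v\}$ one reads off $n_{u,v}$ by cutting the unique $u$--$v$ path, using that in a tree a vertex $w$ lies in $W_{u,v}$ exactly when the point where $w$'s shortest path meets the $u$--$v$ path is strictly on the $u$-side of the midpoint of that path (when $d(u,v)$ is odd there is no such midpoint vertex and no vertex is equidistant, while when $d(u,v)$ is even precisely the branch hanging off the midpoint is equidistant).

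Carrying out this routine bookkeeping: for $\ell=1$ the $n$ edges of type $\{a_i,a\}$ and the $m$ edges of type $\{b_j,b\}$ each contribute $n+m+1$, the edge $\{a,x\}$ contributes $|n-m-1|$, and the edge $\{x,b\}$ contributes $n-m+1$ (no absolute value is needed here since $n\ge m$), whence $\mathrm{Mo}^1(\G)=(n+m)(n+m+1)+|n-m-1|+(n-m+1)$. For $\ell=2$, apart from the discarded pairs the only ones at distance $2$ are the $n$ pairs $\{a_i,x\}$ (each contributing $m+1$), the $m$ pairs $\{b_j,x\}$ (each contributing $n+1$) and the pair $\{a,b\}$ (contributing $n-m$), whence $\mathrm{Mo}^2(\G)=2nm+2n$. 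For $\ell=3$ the pairs at distance $3$ are the $n$ pairs $\{a_i,b\}$ (each contributing $|n-m-1|$) and the $m$ pairs $\{a,b_j\}$ (each contributing $n-m+1$), whence $\mathrm{Mo}^3(\G)=n|n-m-1|+m(n-m+1)$. Finally, for $\ell=4$ the only pairs are the $nm$ pairs $\{a_i,b_j\}$, each contributing $n-m$, so $\mathrm{Mo}^4(\G)=nm(n-m)$.

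It then remains to add the four quantities and regroup: the two summands involving $|n-m-1|$ merge into $(n+1)|n-m-1|$; the summands $(n-m+1)$, $2nm+2n$ and $m(n-m+1)$ merge, via $(m+1)(n-m+1)+2n(m+1)=(m+1)(3n-m+1)$, into the third term of the claimed identity; and $(n+m)(n+m+1)$ together with $nm(n-m)$ supply the remaining two terms. This produces exactly $\uB(\SSx{n}{m})=(n+m)(n+m+1)+(n+1)|n-m-1|+(m+1)(3n-m+1)+nm(n-m)$. The only part requiring genuine care is the per-pair analysis of the pairs involving one of the vertices $x$, $a$, $b$, where one must correctly locate the equidistant vertices — in particular this is where the absolute value $|n-m-1|$, coming from the edge $\{a,x\}$ and the pairs $\{a_i,b\}$, enters, reflecting that for $n=m$ and $n=m+1$ the relevant component sizes cross over — whereas everything else is elementary arithmetic.
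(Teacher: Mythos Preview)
Your proof is correct and follows exactly the approach of the paper: compute $\mathrm{Mo}^\ell(\G)$ separately for $\ell=1,2,3,4$ by inspecting each type of vertex pair, then sum and regroup. Your values for $\mathrm{Mo}^1,\mathrm{Mo}^2,\mathrm{Mo}^3,\mathrm{Mo}^4$ coincide with the paper's (your $2nm+2n$ is just the simplified form of the paper's $n(m+1)+m(n+1)+n-m$), and your final regrouping is the one the paper leaves implicit.
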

\begin{proof}
Observe first that the diameter of $\G = \SSx{n}{m}$ is 4. It is easy to see that 
	$$
	\mathrm{Mo}^1(\G) = (n+m)(n+m+1)+|n-(m+1)| + n-m+1,
	$$
	$$
	\mathrm{Mo}^2(\G) = n(m+1) + m(n+1) + n-m,
	$$
$$
	\mathrm{Mo}^3(\G) = n|n-(m+1)| + m(n+1-m) \quad \text{and}\quad \mathrm{Mo}^4(\G) = nm(n-m),
$$
and so the result follows.
\end{proof}

\begin{theorem}
\label{thm:mstars}
Let $N \geq 3$ be an integer and let $\mathcal{S}_N$ be the set of all merged stars and all subdivided merged stars of order $N$. Then the following hold:
\begin{itemize}
\itemsep = 0pt
\item[(i)] If $N = 2(m+1)$ is even, then $\SS{m}{m}$ is the unique member of $\mathcal{S}_N$ having the smallest possible distance-unbalancedness index among all members of $\mathcal{S}_N$ and $\uB(\SS{m}{m}) = 6m^2$.
\item[(ii)] If $N = 2m+3$ is odd, then $\SSx{m}{m}$ is the unique member of $\mathcal{S}_N$ having the smallest possible distance-unbalancedness index among all members of $\mathcal{S}_N$ and $\uB(\SSx{m}{m}) = 6m^2+6m+2$.
\end{itemize}
\end{theorem}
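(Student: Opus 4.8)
The plan is to reduce the statement entirely to the closed formulas of Propositions~\ref{dstar:prop1} and~\ref{dstar:prop2}, after parametrising $\mathcal{S}_N$ so that the distinguished members sit at a natural basepoint. If $N=2(m+1)$ is even, the merged stars of order $N$ are precisely the graphs $\SS{m+t}{m-t}$ with $t=0,1,\dots,m-1$, and the subdivided merged stars of order $N$ are precisely the graphs $\SSx{m+s}{m-1-s}$ with $s=0,1,\dots,m-2$; here $\SS{m}{m}$ is the case $t=0$. If $N=2m+3$ is odd, the merged stars of order $N$ are the graphs $\SS{m+1+t}{m-t}$ with $t=0,\dots,m-1$ and the subdivided merged stars of order $N$ are the graphs $\SSx{m+s}{m-s}$ with $s=0,\dots,m-1$; here $\SSx{m}{m}$ is the case $s=0$. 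In each family the parameter ($t$ or $s$) measures the imbalance between the two stars.

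First I would substitute these parametrisations into the two propositions above. In each case, after collecting terms so as to isolate the parameter, $\uB$ takes the form $c_N+(\text{parameter})\cdot h(\text{parameter})$, where $c_N$ is the value at parameter $0$ and $h$ is (minus) a parabola in the parameter; explicitly, in the even case one gets $\uB(\SS{m+t}{m-t})=6m^2+2t\,(m^2-t^2-t+1)$ and $\uB(\SSx{m+s}{m-1-s})=(7m^2-m)+s\,(2m^2+2m-2s^2-5s-1)$, with entirely analogous expressions in the odd case. The key elementary observation is that the factor $h$ is strictly positive throughout the admissible range of the parameter: this is a one-line estimate, since $h$ is decreasing in the parameter for nonnegative values, so its minimum over the admissible range is attained at the largest admissible value, where it is visibly positive (for instance $m^2-t^2-t+1=m+1$ at $t=m-1$). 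Hence, within each of the (at most) four sub-families, $\uB$ is strictly minimised at parameter $0$, i.e.\ at the balanced member. It then remains to compare the (at most) four basepoint values. In the even case one compares $\uB(\SS{m}{m})=6m^2$ with the smallest subdivided value $\uB(\SSx{m}{m-1})=7m^2-m$, and $6m^2<7m^2-m$ precisely when $m\ge 2$ (for $m=1$, i.e.\ $N=4$, there are no subdivided merged stars at all). In the odd case one compares $\uB(\SSx{m}{m})=6m^2+6m+2$ with the smallest merged value $\uB(\SS{m+1}{m})=7m^2+7m+2$, and $6m^2+6m+2<7m^2+7m+2$ for every $m\ge 1$. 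The few genuinely small orders (essentially $N\in\{3,4\}$, where $\mathcal{S}_N$ is trivial) are checked directly.

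The one place where care is genuinely needed — and which I would flag as the main, though still minor, obstacle — is the term $(n+1)\,|n-m-1|$ in Proposition~\ref{dstar:prop2}. For the subdivided merged star $\SSx{m}{m}$ of the odd case the quantity $n-m-1$ equals $-1$, so this graph does \emph{not} obey the generic polynomial obtained by replacing $|n-m-1|$ with $n-m-1=2s-1$; it has to be evaluated on its own, and one must then separately check that it really is the minimum of its family (the $s\ge 1$ members give $6m^2+4m+2s\,(m^2+2m+2-s-s^2)$, which for $1\le s\le m-1$ exceeds $6m^2+6m+2$ by the same kind of estimate as above). This exceptional evaluation is exactly why the correct basepoint value $6m^2+6m+2$ exceeds the value $6m^2+4m$ predicted by the generic $s=0$ formula, by precisely $2(m+1)$. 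Once this single exceptional case has been isolated, everything else is routine algebra together with the elementary positivity estimates described above.
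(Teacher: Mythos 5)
Your argument is correct, but it takes a genuinely different route from the paper's. The paper fixes $N$, writes $\uB(\SS{n}{N-n-2})$ and $\uB(\SSx{n}{N-n-3})$ as cubic polynomials in the larger star size $n$, locates the three real roots of each cubic in explicit intervals, and concludes that on the admissible interval the minimum is attained at one of the two endpoints, which it then compares; since the root-location estimates require $N\geq 16$, the orders $3\leq N\leq 15$ have to be verified separately from Propositions~\ref{dstar:prop1} and~\ref{dstar:prop2}. Your reparametrisation by the imbalance instead exhibits $\uB$ as the basepoint value plus $(\text{parameter})\cdot h(\text{parameter})$ with $h$ a decreasing quadratic that is still positive at the right end of the admissible range (I checked your four expansions and the endpoint values $h(m-1)=m+1$, $h(m-2)=5m+1$, $h(m-1)=m+2$ and $h(m-1)=3m+2$ in the respective sub-families, as well as the final comparisons $6m^2<7m^2-m$ for $m\ge 2$ and $6m^2+6m+2<7m^2+7m+2$), so strict minimality of the balanced member within each sub-family holds uniformly in $N$ and no separate treatment of small orders is needed beyond noting that $\mathcal{S}_4$ contains no subdivided merged stars. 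Both proofs must isolate the term $(n+1)|n-m-1|$ of Proposition~\ref{dstar:prop2} when $n=m$: the paper splits $g$ into the cases $n=m$ and $n\geq m+1$, and your ``exceptional basepoint'' discussion does exactly the same, correctly accounting for the discrepancy $2(m+1)$ between $6m^2+6m+2$ and the generic value $6m^2+4m$. What your approach buys is a cleaner, fully uniform argument with only elementary monotonicity estimates in place of the cubic-root analysis; what it costs is essentially nothing, so this is a legitimate (and arguably preferable) alternative proof.
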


\begin{proof}That the result holds for all $N$ with $3 \leq N \leq 15$ can easily be verified using Propositions~\ref{dstar:prop1} and~\ref{dstar:prop2}. For the rest of the proof we thus assume $N \geq 16$.

Consider first all merged stars $\SS{n}{m}$ where $N = n+m+2$ with $n \geq m \geq 1$. Of course, $m = N-n-2$ and $1 \leq m \leq (N-2)/2$ while $(N-2)/2 \leq n \leq N-3$. Write $f(n) = \uB(\SS{n}{m})$. Using Proposition~\ref{dstar:prop1} and rearranging terms we find that 
$$
	f(n) = -2n^3 + (3N-8)n^2 -(N^2-6N+6)n + (N-2)(N-3)
$$
is a polynomial in $n$ of degree $3$ with a negative leading coefficient. As $f(0) > 0$ and $f(2) = -N^2+19N-54 < 0$ (recall that $N \geq 16$), $f$ has a root in the interval $(0,2)$. Since $f((N-8)/2) = 30$ and $(N-8)/2 \geq 4 > 2$, $f$ has another root in the interval $(2,(N-8)/2)$. Finally, $f(N-2)=N^2-3N+2 > 0$ and $f(N-1)=6-4N < 0$, and so $f$ has its third root in the interval $(N-2,N-1)$. Since we are only interested in the value of $f(n)$ for $n$ satisfying $(N-2)/2 \leq n \leq N-3$, we thus only need to compare $f(N-3)$ to $f(n)$ for the smallest integer $n$ with $n \geq (N-2)/2$. Now,  
$$
	f(N-3) = 2(N-1)(N-3), \ f((N-2)/2) = \frac{3(N-2)^2}{2}\ \text{and}\ f((N-1)/2) = \frac{7N^2-28N+29}{4},
$$
and so $f(N-3) > f((N-1)/2) > f((N-2)/2)$, implying that for $N$ even $\uB(\SS{n}{m})$ is minimal for $m = n = (N-2)/2$ (and equals $3(N-2)^2/2 = 6n^2$), while for $N$ odd $\uB(\SS{n}{m})$ is minimal for $n = (N-1)/2$ and $m = (N-3)/2$ (and equals $(7N^2-28N+29)/4$).

Consider now all subdivided merged stars $\SSx{m}{n}$ where $N = n+m+3$ with $n \geq m \geq 1$. Then $m = N-n-3$, $1 \leq m \leq (N-3)/2$ and $(N-3)/2 \leq n \leq N-4$. Write $g(n) = \uB(\SSx{n}{m})$. If $m = n$ (which is only possible if $N = 2n+3$ is odd), then Proposition~\ref{dstar:prop2} implies that 
$$
	g(n) = 6n^2+6n+2 = \frac{3N^2-12N+13}{2}.
$$
If however $n \geq m+1$ (in which case $n \geq (N-2)/2$), then Proposition~\ref{dstar:prop2} implies that 
$$
	g(n) = -2n^3 + (3N-11)n^2 - (N^2-10N+17)n
$$
is a polynomial in $n$ of degree $3$ with a negative leading coefficient. Note that $g(0) = 0$. Similarly as above we find that $g$ also has one root in the interval $((N-10)/2,(N-8)/2)$ and another one in the interval $(N-2,N-1)$. Therefore, in the interval $((N-2)/2, N-4)$ the function $g$ attains its minimum either in $N-4$ or in the smallest integer $n$ with $n \geq (N-2)/2$. As
$$
	g(N-4) = (N-4)(3N-5),\ g((N-2)/2) = \frac{7N^2-30N+32}{4}\ \text{and}\ g((N-1)/2) = 2(N-1)(N-3),
$$
we see that $g(N-4) > g((N-1)/2) > g((N-2)/2)$ (recall that $N \geq 16$), and so the fact that also $(3N^2-12N+13)/2 < 2(N-1)(N-3)$ implies that for $N$ odd $\uB(\SSx{n}{m})$ is minimal for $m = n = (N-3)/2$ (and equals $(3N^2-12N+13)/2 = 6n^2+6n+2$), while for $N$ even $\uB(\SS{n}{m})$ is minimal for $n = (N-2)/2$ and $m = (N-4)/2$ (and equals $(7N^2-30N+32)/4$).

To complete the proof we now simply have to compare the above obtained smallest possible value of $\uB(\SS{n}{m})$ and $\uB(\SSx{n}{m})$ depending on whether $N$ is even or odd. 
\end{proof}

Returning to the above data regarding the distance-unbalancedness index of trees up to order $15$, it seems that determining the largest (or second largest) possible value of this index among all trees of order $n$ is even more difficult than proving Conjecture~\ref{con1}. Inspecting the trees attaining these values for $n$ up to $15$ one finds that all of these trees are spider graphs. As was indicated in the above table, for each integer $n$ with $5 \leq n \leq 15$, except for $n = 10$, there is a unique tree of order $n$ having the largest possible distance-unbalancedness index among all trees of order $n$. For $n = 10$ there are three such trees, namely the spiders $\Sp(3,2,2,2), \Sp(3,3,2,1)$ and $\Sp(3,2,2,1,1)$. For $n$ with $5 \leq n \leq 15$, $n \neq 10$, the unique maximal tree with respect to the distance-unbalancedness index is given in the following table, where for $\Sp(n_1, n_2, \ldots , n_k)$ we simply state the sequence $(n_1,n_2, \ldots , n_k)$ (see also Figure~\ref{fig:spiders}).

{\small 
\begin{tabular}{c|c@{\,}c@{\,}c@{\,}c@{\,}c@{\,}c@{\,}c@{\,}c@{\,}c@{\,}c}
	n & 		5   & 6   & 7  & 8 & 9 & 11 & 12 & 13 & 14 & 15 \\
	\hline 
	max &  (2,1,1) & (2,2,1) & (2,2,1,1) & (2,2,2,1) & (3,2,2,1) & (3,3,2,2) & (3,3,2,2,1) & (3,3,3,2,1) & (3,3,3,2,2) & (4,3,3,2,2) 
\end{tabular}
}
\begin{figure}[!h]
\begin{center}
	\includegraphics[scale=0.9]{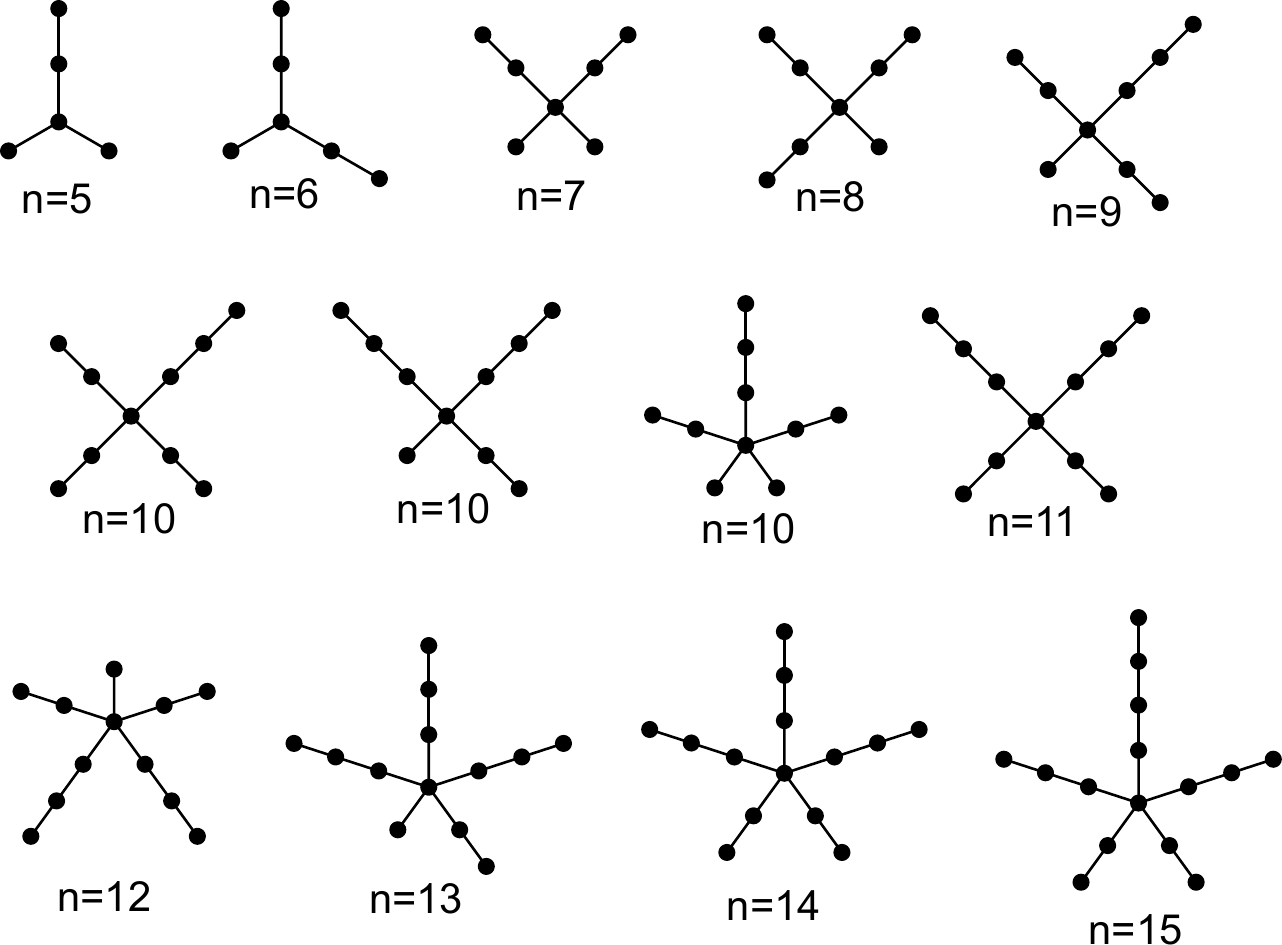}
	\caption{Maximal trees with respect to the distance-unbalancedness index.}
	\label{fig:spiders}
\end{center}
\end{figure}
\medskip

In view of this data it is interesting to ask whether for all $n \geq 11$ there is a unique tree with maximal distance-unbalancedness index among all trees of order $n$ or there are some values of $n$ where a similar situation as for $n = 10$ occurs. Also, the following problem is of interest.

\begin{problem}
\label{prob:spiders}
For each integer $n \geq 5$ classify the spiders of order $n$ that have the largest possible distance-unbalancedness index among all spiders of order $n$.
\end{problem}

\section{Minimal distance-unbalancedness}
\label{sec:min}

In the previous section we considered the problem of determining the smallest nonzero and the largest possible distance-unbalancedness index among all trees of given order. Given that those two problems appear to be rather difficult it seems that determining the smallest nonzero (and the largest) possible distance-unbalancedness index among all connected graphs of given order might be even more difficult. It seems natural to speculate that, as with trees, the smallest possible nonzero distance-unbalancedness index should increase with the order of the considered graphs. Nevertheless, as we shall see later, this is not true.

One can of course again use computers to investigate all connected graphs up to a given reasonably small order. We have looked at all connected graphs up to order $10$ using the lists of all such graphs available at~\cite{McKayWeb}. For each order $n$, $3 \leq n \leq 10$, we determined the smallest possible nonzero value $\uB(\G)$ among all connected graphs of order $n$ and determined all graphs $\G$ of order $n$ attaining this value. The following table summarizes the obtained data where for each order $n$ the smallest possible nonzero distance-unbalancedness index (min) for connected graphs of order $n$ and the number of graphs attaining this value ($\#\text{min}$) are given.
$$
\begin{array}{c|cccccccc}
	\text{n} & 		3 & 4 & 5   & 6   & 7  & 8 & 9 & 10 \\
	\hline 
	\text{min} &  		2 & 4 & 4 & 8   &	6  & 8  & 6  & 8 \\
	\#\text{min} &     1 & 1 & 2 & 10  &	8  & 2  & 1  & 17 \\
\end{array}
$$

With trees the computer generated data is large enough to give a very strong indication of what should be proved (Conjecture~\ref{con1}). In this general setting the set of orders considered is perhaps a bit to small to draw conclusions but it does indicate that perhaps the above mentioned ``natural speculation'' is not correct. Nevertheless, we propose the following problem (although the obtained data suggests that at least the second part of the problem might be extremely difficult if not impossible to solve). 

\begin{problem}
\label{prob:allmin}
For each integer $n \geq 3$ determine the smallest possible nonzero value of the distance-unbalancedness index among all connected graphs of order $n$ and classify all graphs attaining this value.
\end{problem}

Looking at the values from the above table it seems that perhaps the solution to the first part of the problem is that for $n \geq 6$ the minimal nonzero value of the distance-unbalancedness index among all connected graphs of order $n$ is $6$ or $8$, depending on whether $n$ is odd or even, respectively. Of course, the obtained data is much too small to justify this speculation. Trying to determine whether it is indeed plausible that there does exist a constant $C$ such that for each integer $n$ the smallest nonzero distance-unbalancedness index among all connected graphs of order $n$ is at most $C$, it makes sense to have a closer look at the minimal graphs obtained for orders up to $n = 10$ (those of order $n$ that have the minimal possible nonzero distance-unbalancedness index among all connected graphs of order $n$). 

Doing so we discovered two graphs that fall into a known infinite family. Namely, the unique minimal graph of order $9$ and one of the 10 minimal graphs of order $6$ belong to an infinite family of bi-regular graphs (some vertices are of valence $3$ while others are of valence $4$) constructed in~\cite{IlKlMi10} (see Figure~\ref{fig:kites} where these two graphs and the members of the family of orders $12$ and $15$ are illustrated). This suggests that determining the distance-unbalancedness index of all members of this infinite family might be of interest. Interestingly enough, the graphs from the family are of two very different types regarding $\ell$-distance-balancedness. Those of even order are not $1$-distance-balanced but are $\ell$-distance-balanced for all $\ell \geq 2$, while those of odd order are $1$-distance-balanced but are not $\ell$-distance-balanced for any odd $\ell \geq 3$.
\begin{figure}[!h]
\begin{center}
	\includegraphics[scale=0.75]{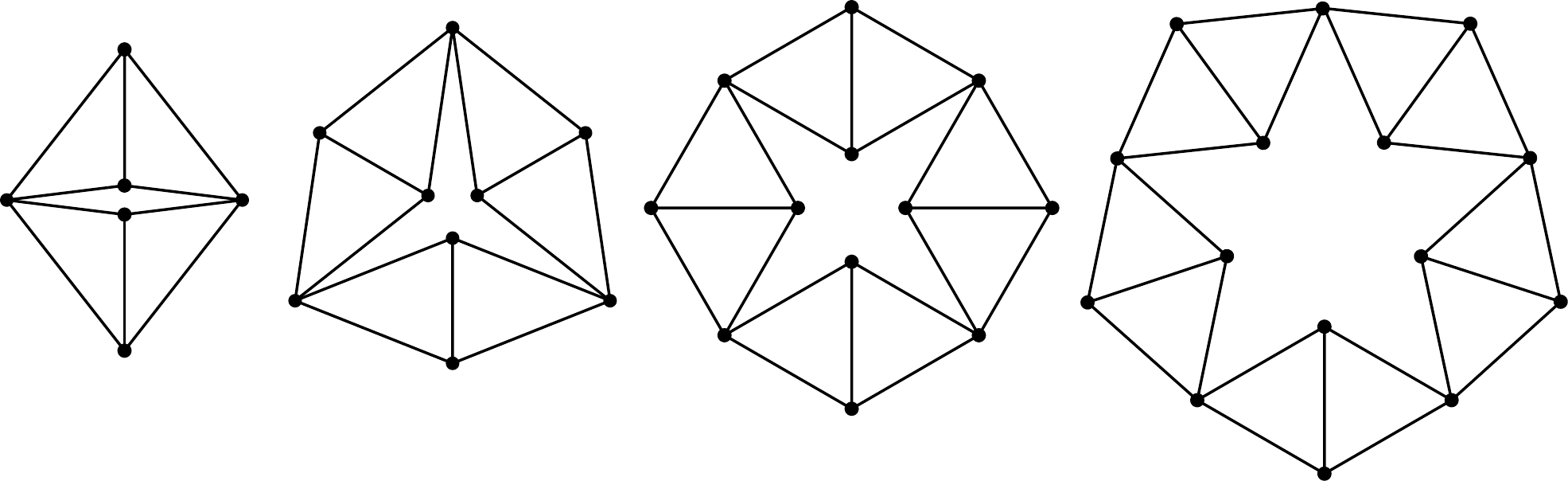}
	\caption{The kite graphs $\Ki(n)$ for $2 \leq n \leq 5$.}
	\label{fig:kites}
\end{center}
\end{figure}

\begin{construction}
For each integer $n \geq 2$ the {\em Kite graph} $\Ki(n)$ is the graph of order $3n$ with vertex set 
$$
	\{a_i \colon i \in \ZZ_n\} \cup \{b_i \colon i \in \ZZ_n\} \cup \{c_i \colon i \in \ZZ_n\},
$$
where for each $i \in \ZZ_n$ the vertices $b_i$ and $c_i$ are adjacent to each other and to $a_i$ and $a_{i+1}$ (the indices are computed modulo $n$). The reason we chose this name for them is that we can think of obtaining $\Ki(n)$ by taking $n$ copies of the $4$-cycle with one additional chord (which can be viewed as a kite) and then gluing them along vertices of valence $2$ in a cyclic fashion (so that the ``old'' vertices of valence $2$ become of valence $4$ in the obtained graph).
\end{construction}

\begin{proposition}
\label{pro:zmaji}
Let $n \geq 2$ be an integer. Then the graph $\Ki(n)$ is a bi-regular graph of diameter $n$ with $n$ vertices of valence $4$ and $2n$ vertices of valence $3$. Moreover, the following hold:
\begin{itemize}
\itemsep = 0pt
\item[(i)] If $n$ is even, then $\Ki(n)$ is not $1$-distance-balanced but is $\ell$-distance-balanced for all $\ell$, $2 \leq \ell \leq n$. Moreover, $\uB(\Ki(n)) = 4n$.
\item[(ii)] If $n$ is odd, then $\Ki(n)$ is $\ell$-distance-balanced for $\ell=1$ and all even $\ell$ with $\ell \leq n-1$, but is not $\ell$-distance-balanced for any odd $\ell$, $3 \leq \ell \leq n$. Moreover, $\uB(\Ki(n)) = 2n(n-2)$.
\end{itemize}
\end{proposition}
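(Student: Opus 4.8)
The plan is first to pin down the distance function of $\Ki(n)$. The key observation is that $\Ki(n)$ contains two induced $2n$-cycles, $C_b = a_0 b_0 a_1 b_1 \cdots a_{n-1} b_{n-1}$ and $C_c = a_0 c_0 a_1 c_1 \cdots a_{n-1} c_{n-1}$, which share precisely the vertices $a_0,\dots,a_{n-1}$, and that the only edges of $\Ki(n)$ outside $C_b \cup C_c$ are the $n$ ``chords'' $b_ic_i$. I would then define $\phi\colon V(\Ki(n)) \to \ZZ_{2n}$ by $\phi(a_i) = 2i$ and $\phi(b_i) = \phi(c_i) = 2i+1$. Every edge other than a chord joins vertices whose $\phi$-values differ by $\pm 1$, while a chord joins vertices with equal $\phi$-value; hence along any walk the $\phi$-image moves by $0$ or $\pm1$ at each step, which gives the lower bound $d_{\Ki(n)}(x,y) \ge d_{C_{2n}}(\phi(x),\phi(y))$ for all $x,y$. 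Conversely, distances inside $C_b$ (resp.\ $C_c$) realize this bound for any two vertices on $C_b$ (resp.\ $C_c$), and a short path through the $a$-vertices does so for a pair $\{b_i,c_j\}$ with $i \ne j$; thus $d_{\Ki(n)}(x,y) = d_{C_{2n}}(\phi(x),\phi(y))$ for every pair $\{x,y\} \ne \{b_i,c_i\}$, and $d(b_i,c_i)=1$. Biregularity is immediate from the adjacencies ($a_i \sim b_{i-1},c_{i-1},b_i,c_i$ has valence $4$; $b_i \sim c_i,a_i,a_{i+1}$ has valence $3$), and the diameter is the maximum of $d_{C_{2n}}$ over $\phi$-images, which one checks equals $n$ in both parities (attained by $\{a_0,a_{n/2}\}$ if $n$ is even and by $\{a_0,b_{(n-1)/2}\}$ if $n$ is odd).

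\textbf{Reduction by symmetry.} The rotation $\rho\colon x_i \mapsto x_{i+1}$ ($x\in\{a,b,c\}$), the reflection $\sigma\colon a_i\mapsto a_{-i},\ b_i\mapsto b_{-i-1},\ c_i\mapsto c_{-i-1}$, the global swap $\prod_i\tau_i$ of all $b$'s and $c$'s, and each single transposition $\tau_i$ (swapping $b_i,c_i$) are automorphisms of $\Ki(n)$. Using suitable compositions one verifies that every pair of vertices other than one of the form $\{a_i,b_j\}$ or $\{a_i,c_j\}$ is swapped by an automorphism, hence is balanced, whereas a pair $\{a_i,b_j\}$ is never swapped since $a_i$ and $b_j$ have different valences. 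Writing $g(k) := \uB(a_0,b_k)$, the automorphisms $\rho$ and $\tau_i$ give $\uB(a_i,b_j)=\uB(a_i,c_j)=g((j-i)\bmod n)$, while $\sigma$ gives $g(k)=g(n-1-k)$; and since there are $n^2$ pairs of each of the two relevant types, $\uB(\Ki(n)) = 2n\sum_{k\in\ZZ_n} g(k)$. So everything reduces to computing a single function $g$.

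\textbf{Computing $g(k)$ and concluding.} By the symmetry $g(k)=g(n-1-k)$ I may assume $\delta := 2k+1 = d(a_0,b_k) \le n$. Since $\delta$ is odd, no vertex other than $b_k,c_k$ is equidistant from $a_0$ and $b_k$ (equidistant vertices would correspond, via $\phi$, to residues of $\ZZ_{2n}$ equidistant from $0$ and $\delta$, of which there are none). From the metric description, $a_j$ lies in $W_{a_0,b_k}$ exactly when the residue $2j$ is strictly closer to $0$ than to $\delta$ in $C_{2n}$, and (for $j\ne k$) $b_j,c_j$ lie there exactly when $2j+1$ is; moreover $c_k\notin W_{a_0,b_k}$ while $c_k\in W_{b_k,a_0}$ iff $\delta\ge 3$. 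The residues of $C_{2n}$ strictly closer to $0$ than to $\delta$ form an arc of $n$ consecutive residues; counting how many of them are even and how many odd (forced when $n$ is even, and determined by the parity of $k$ when $n$ is odd) yields $n_{a_0,b_k}$ and $n_{b_k,a_0}$ explicitly, and one obtains
$$ g(k) = \begin{cases} 1 & k\in\{0,n-1\},\\ 0 & \text{otherwise},\end{cases}\ \ (n\text{ even}), \qquad g(k) = \begin{cases} 0 & k\in\{0,n-1\},\\ 1 & \text{otherwise},\end{cases}\ \ (n\text{ odd}). $$
Summing gives $\uB(\Ki(n)) = 4n$ for $n$ even and $2n(n-2)$ for $n$ odd. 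Reading off which pairs are unbalanced: when $n$ is even the unbalanced pairs are $\{a_i,b_i\}$, $\{a_i,b_{i-1}\}$ and their $c$-analogues, all at distance $1$, which yields (i); when $n$ is odd the unbalanced pairs are the $\{a_i,b_j\}$, $\{a_i,c_j\}$ with $(j-i)\bmod n\in\{1,\dots,n-2\}$, whose distances $\min(2k+1,2n-2k-1)$ range over exactly the odd integers in $\{3,5,\dots,n\}$, which yields (ii).

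\textbf{Main obstacle.} The delicate part is the last step: correctly accounting for the ``twin'' vertex $c_k$ and the degenerate case $k=0$ in the definitions of $W_{a_0,b_k}$ and $W_{b_k,a_0}$, and tracking the parities of the endpoints of the arc of residues closer to $0$ when $n$ is odd. This parity bookkeeping is precisely what produces the two different values of $\uB(\Ki(n))$ and the even/odd dichotomy in the $\ell$-distance-balancedness statement; everything else (the metric description and the symmetry reduction) is routine once the two $2n$-cycles and the map $\phi$ are in place.
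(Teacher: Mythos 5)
Your proof is correct and follows essentially the same strategy as the paper: reduce by the automorphisms $\rho$, $\sigma$ and the transpositions of $b_i$ with $c_i$ to the pairs $\{a_0,b_k\}$, observe that (apart from $c_0$ when $k=0$) no vertex is equidistant from $a_0$ and $b_k$, and then count. The only difference is that you make explicit, via the projection $\phi$ to $\ZZ_{2n}$ and the parity count on the arc of residues closer to $0$, the distance computations that the paper dismisses as "straightforward to verify" and "left to the reader"; your values of $g(k)$ and the resulting totals $4n$ and $2n(n-2)$ check out.
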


\begin{proof}
The initial assertions follow straight from the definition. Let $\rho$ and $\sigma$ be the permutations of the vertex-set of $\Ki(n)$ such that for each $i \in \ZZ_n$
$$
	a_i\rho = a_{i+1}, b_i\rho = b_{i+1}, c_i\rho = c_{i+1},\ \text{and}\ a_i\sigma = a_{-i}, b_i\sigma = b_{-(i+1)}, c_i\sigma = c_{-(i+1)}.
$$
Furthermore, le $\tau_0$ be the permutation of the vertex-set of $\Ki(n)$, interchanging $b_0$ with $c_0$ and leaving all other vertices fixed. It is easy to see that these three permutations are in fact automorphisms of $\Ki(n)$. The automorphism group of $\Ki(n)$ thus contains the group $\la \rho, \sigma, \tau_0\ra$, and so it is clear that the only possible unbalanced pairs are of the form $\{a_i, b_j\}$ and $\{a_i, c_j\}$. Moreover, it suffices to determine all $\uB(a_0, b_j)$ where $j < n/2$. To do this observe that $c_0$ is the unique vertex which has equal distance to $a_0$ and $b_0$ while for $1 \leq j < n/2$ no vertex of $\Ki(n)$ has equal distance to $a_0$ and $b_j$. It is now straightforward to verify that if $n$ is even $\uB(a_0,b_0) = 1$ and $\uB(a_0,b_j) = 0$ for all $j$, $1 \leq j < n/2$, while if $n$ is odd $\uB(a_0,b_0) = 0$ (see also~\cite{IlKlMi10}) and $\uB(a_0, b_j) = 1$ for all $j$, $1 \leq j < n/2$. To complete the proof one now simply has to count the number of all such pairs. We leave details to the reader.
\end{proof}

The above proposition shows that the graph $\Ki(n)$, where $n$ is odd, has a rather large distance-unbalancedness index, and so it is not surprising that there exist graphs of order $3n$ with a smaller nonzero distance-unbalancedness index (except of course for $n = 3$). Indeed, for any odd $n \geq 5$, Proposition~\ref{pro:multi} and Proposition~\ref{pro:zmaji} show that $\uB(\Ki(n)) = 2n(n-2) \geq 6n > 6n-4 = \uB(\G)$, where $\G$ is the complete multipartite graph of order $3n$ of the form $K_{2,1,1,\ldots , 1}$. 

While on the other hand the graphs $\Ki(n)$ with $n$ even seem to be better in this respect their distance-unbalancedness index is still not bounded by a constant. But there is a different generalization of the graph $\Ki(2)$ (which is minimal among connected graphs of order $6$ in the above mentioned sense) that gives such a constant at least for graphs of even orders. Observe that (but see also~\cite{IlKlMi10}) the construction of the graph $\Ki(n)$ can also be described as follows. Start with the cycle $C_{2n}$ of length $2n$ and think of its vertices being labeled by elements of $\ZZ_{2n}$ in a natural way. For each even $i \in \ZZ_{2n}$ introduce a copy $i'$ of the vertex $i$ and connect it to each of $i$, $i-1$ and $i+1$. The resulting graph is of course $\Ki(n)$. But what if we introduce copies of just a few vertices and not of every other one in $C_{2n}$?

\begin{construction}
Let $n \geq 2$ be an integer. Label the vertices of the cycle $C_{2n}$ in a natural way by elements of $\ZZ_{2n}$ so that for each $i$ the vertex $i$ is adjacent to $i+1$ and $i-1$. Introduce two new vertices, say $x$ and $y$, and connect $x$ to $0$, $-1$ and $1$, and $y$ to $n$, $n-1$ and $n+1$. Denote the obtained graph of order $2n+2$ by $\tilde{C}_{2n}$ (see Figure~\ref{fig:cycles2} where the first few examples are illustrated).
\end{construction}
\begin{figure}[!h]
\begin{center}
	\includegraphics[scale=0.75]{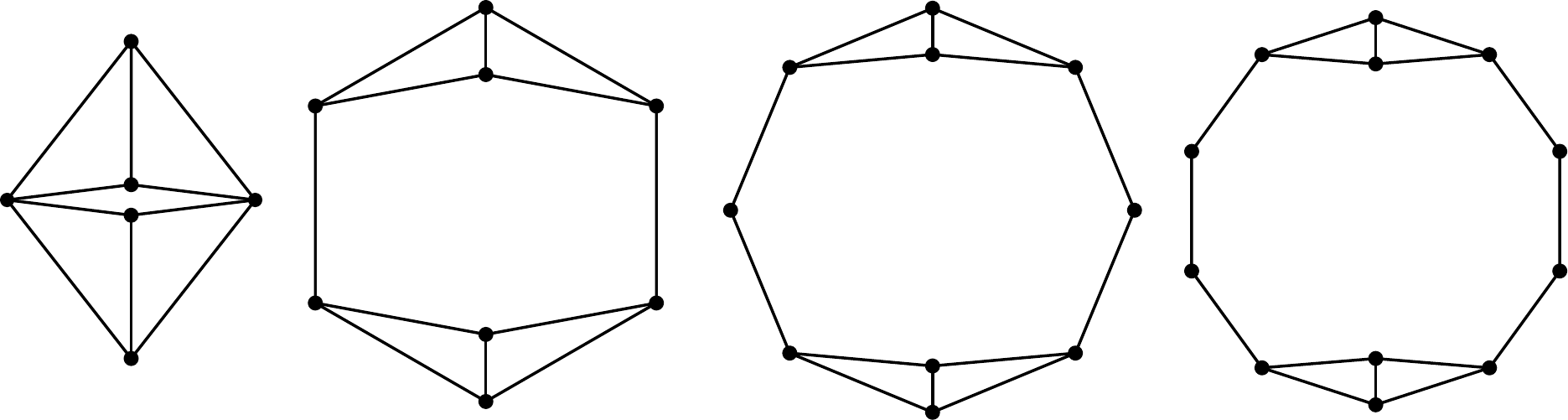}
	\caption{The graphs $\tilde{C}_{2n}$ for $2 \leq n \leq 5$.}
	\label{fig:cycles2}
\end{center}
\end{figure}

\begin{proposition}
\label{pro:CQ}
Let $n \geq 2$ be an integer. Then $\uB(\tilde{C}_{2n}) = 8$. Moreover, the graph $\tilde{C}_{2n}$ is of order $2n+2$ and diameter $n$ and is not $1$-distance-balanced, but is $\ell$-distance-balanced for all $\ell$, $2 \leq \ell \leq n$.
\end{proposition}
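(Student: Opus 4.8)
\textbf{Setup: metric and symmetries of $\tilde{C}_{2n}$.} The plan is to first pin down distances and automorphisms, and then reduce $\uB$ to a short analysis of pairs of cycle vertices. Since $x$ (resp. $y$) is adjacent only to the three consecutive cycle vertices $-1,0,1$ (resp. $n-1,n,n+1$), no walk through $x$ or $y$ can be shorter than a walk that stays on the cycle; hence $d(i,j)=d_{C_{2n}}(i,j)$ for cycle vertices $i,j$, while $d(x,w)=\max\{1,d_{C_{2n}}(0,w)\}$ and $d(y,w)=\max\{1,d_{C_{2n}}(n,w)\}$ for every cycle vertex $w$, and $d(x,y)=n$; in particular the diameter is $n$. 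For automorphisms I would record the rotation $\sigma\colon i\mapsto i+n$ combined with $x\leftrightarrow y$, the reflection $\tau\colon i\mapsto -i$ fixing $x$ and $y$, and — crucially — the transposition $\varphi$ that interchanges $x$ with $0$ and fixes every other vertex (a quick check shows it permutes the edge set), together with its analogue $\psi$ interchanging $y$ with $n$.

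\textbf{Reduction to cycle pairs.} These automorphisms dispose of almost all pairs. The pair $\{x,y\}$ is balanced via $\sigma$; the pair $\{x,0\}$ via $\varphi$; the pair $\{y,n\}$ via $\psi$; and for any cycle vertex $j$ the automorphism $\varphi$ maps $\{x,j\}$ to $\{0,j\}$ and $\psi$ maps $\{y,j\}$ to $\{n,j\}$, so $\uB(x,j)=\uB(0,j)$ and $\uB(y,j)=\uB(n,j)$. Thus it suffices to compute $\uB(i,j)$ for pairs of cycle vertices. For such a pair the number of \emph{cycle} vertices closer to $i$ than to $j$ in $\tilde{C}_{2n}$ equals $n_{i,j}(C_{2n})$ (distances between cycle vertices are unchanged), and symmetrically for $j$; since $C_{2n}=\Cay(\ZZ_{2n},\{1,-1\})$ is a Cayley graph of an abelian group it is highly distance-balanced, so these two counts coincide. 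Hence $n_{i,j}-n_{j,i}=\varepsilon_x+\varepsilon_y$, where $\varepsilon_x=\mathrm{sgn}(\delta_0(j)-\delta_0(i))$ with $\delta_0(w):=d(x,w)-1=\max\{0,d_{C_{2n}}(0,w)-1\}$, and likewise $\varepsilon_y=\mathrm{sgn}(\delta_n(j)-\delta_n(i))$ with $\delta_n(w):=\max\{0,d_{C_{2n}}(n,w)-1\}$.

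\textbf{The key identity and case analysis.} The heart of the matter is that $0$ and $n$ are antipodal in $C_{2n}$, so $d_{C_{2n}}(0,w)+d_{C_{2n}}(n,w)=n$ for every cycle vertex $w$; consequently $\delta_0(w)+\delta_n(w)=n-2$ for all $w\notin\{0,n\}$ and $\delta_0(w)+\delta_n(w)=n-1$ for $w\in\{0,n\}$. If $i,j\notin\{0,n\}$ then $\delta_0(j)-\delta_0(i)=-(\delta_n(j)-\delta_n(i))$, so $\varepsilon_x=-\varepsilon_y$ and the pair is balanced. The pair $\{0,n\}$ is balanced because $\sigma$ interchanges $0$ and $n$. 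In the only remaining case exactly one endpoint lies in $\{0,n\}$; taking it to be $0$ (the case of endpoint $n$ reduces to this via $\sigma$), one has $\delta_0(0)=0$, which forces $\varepsilon_x=1$ unless $j$ is a cycle-neighbour of $0$, in which case $\varepsilon_x=0$; meanwhile $\delta_n(0)=n-1$ while $\delta_n(j)\le n-2$, forcing $\varepsilon_y=-1$. Hence $\uB(0,j)=0$ unless $j\in\{-1,1\}$, in which case $\uB(0,j)=1$.

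\textbf{Assembling the count.} Collecting the unbalanced pairs: among cycle pairs exactly $\{0,1\},\{0,-1\},\{n,n-1\},\{n,n+1\}$, each contributing $1$; and via $\uB(x,j)=\uB(0,j)$, $\uB(y,j)=\uB(n,j)$ exactly $\{x,1\},\{x,-1\},\{y,n-1\},\{y,n+1\}$, again each contributing $1$. All eight join vertices at distance $1$, so $\tilde{C}_{2n}$ is $\ell$-distance-balanced for every $\ell$ with $2\le\ell\le n$ but not for $\ell=1$, and $\uB(\tilde{C}_{2n})=8$. I expect the only real difficulty to be the bookkeeping around the special vertices $0$ and $n$: one must check carefully that the clipping in $\delta_0,\delta_n$ spoils the cancellation $\varepsilon_x=-\varepsilon_y$ only at the pair $\{0,n\}$ itself and at edges incident to $0$ or $n$, and one must verify that $\varphi$ and $\psi$ genuinely are automorphisms, since the reductions $\uB(x,j)=\uB(0,j)$ and $\uB(y,j)=\uB(n,j)$ rest entirely on that.
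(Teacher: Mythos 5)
Your proof is correct and follows essentially the same route as the paper's: the same four automorphisms (the paper's $\tau_x,\tau_y,\tau_0,\rho$ are your $\varphi,\psi,\tau,\sigma$) reduce everything to pairs of cycle vertices, the high distance-balancedness of $C_{2n}$ makes only the placement of $x$ and $y$ matter, and the same eight adjacent pairs each contribute $1$. Your $\varepsilon_x+\varepsilon_y$ bookkeeping via the antipodality identity $d(0,w)+d(n,w)=n$ merely makes explicit what the paper dispatches with ``the pair is clearly balanced.''
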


\begin{proof}
Let $\tau_x, \tau_y$ be the automorphisms of $\tilde{C}_{2n}$ fixing all of its vertices except $0$ and $x$, respectively $n$ and $y$, which are swapped. Similarly, let $\tau_0$ be the automorphism interchanging vertices $i$ and $-i$ of $\tilde{C}_{2n}$ for all $i \in \ZZ_{2n}$ and fixing each of $0$, $n$, $x$ and $y$. Moreover, let $\rho$ be the automorphism interchanging each $i \in \ZZ_{2n}$ with $n+i$ and interchanging $x$ with $y$. In view of these automorphisms it thus suffices to consider only pairs of vertices of the form $\{i,j\}$, where $0 \leq i < j \leq n-1$. In fact, because of the automorphism $\tau_0\rho$ we can in fact assume that $i < n-j$ (if $i = n-j$ this automorphism actually swaps $i$ and $j$, and so this pair is balanced). Now, if $i \neq 0$ or $j \neq 1$, then $x \in W_{i,j}$ while $y \in W_{j,i}$, and so the pair $\{i,j\}$ is clearly balanced. On the other hand, as $x$ is a common neighbor of $0$ and $1$, the pair $\{0,1\}$ is not balanced and $\uB(0,1) = 1$ (as $y \in W_{1,0}$). Therefore, the only unbalanced pairs correspond to the 8 edges $\{0, \pm 1\}$, $\{x, \pm 1\}$, $\{n, n \pm 1\}$, $\{y, n \pm 1\}$ and so the result follows.
\end{proof}

Perhaps there is a similar construction (based on a suitable cycle) that for each odd $n \geq 7$ results in a graph of order $n$ with distance-unbalancedness index equal to $6$, but it is not completely obvious how to obtain it. We thus leave the question of whether (and how) such graphs might be constructed unanswered. 
\bigskip

We conclude the paper with the following remarks. Even though we were able to obtain a connected graph $\G$ of even order $n$ with $\uB(\G) = 8$ for each even $n \geq 6$, it does feel that in obtaining these graphs we were ``cheating'' a bit - the examples were obtained by taking a cycle (which is in fact distance transitive, implying that for any pair of vertices there is an automorphism swapping them) and then changing it very slightly so as to introduce a tiny unbalancedness. But what if one only considers regular graphs? It is well known and easy to see that all (connected) regular graphs up to order $6$ are Cayley graphs of abelian groups and are thus highly distance-balanced. The only connected regular graph of order $7$ which is not a Cayley graph (of an abelian group) is the complement of the disjoint union of a $3$-cycle and a $4$-cycle. It is not difficult to see however, that this graph is highly distance-balanced. 

With order $8$ the situation changes. In fact, the two connected graphs of order $8$ with minimal nonzero distance-unbalancedness index (recall that it is equal to $8$) are both cubic. One of them is $\tilde{C}_6$, while the other is obtained from the $8$-cycle by adding the edges $\{0,4\}, \{1,3\}, \{2,6\}$ and $\{5,7\}$. Using a computer one can verify that all connected regular graphs of order $9$ are again highly distance-balanced. For order $10$ recall that the minimal nonzero distance-unbalancedness index among all connected graphs of order $10$ is $8$ and that it is attained by $17$ graphs. It turns out that apart from $\tilde{C}_8$ (which is not regular) all of the remaining $16$ graphs are $4$-regular. Finally, using a computer one can verify that the smallest nonzero distance-unbalancedness index among all connected regular graphs of order $11$ is $10$ and that it is attained by $43$ graphs. 

It is not difficult to see that $9$ is the largest odd order $n$ such that there does not exist a connected regular graph of order $n$ which is not highly distance-balanced. Namely, let $n = 2m+1$ with $m \geq 5$. Take any connected $4$-valent graph $\Delta$ of order $m$ and let $x$ and $y$ be a pair of adjacent vertices in $\Delta$. Construct a new graph $\G$ by taking two copies of $\Delta$ (denote the second copy by $\Delta'$ and the analogues of $x$ and $y$ in $\Delta'$ by $x'$ and $y'$), deleting the edges $xy$ and $x'y'$, introducing a new vertex $z$ and connecting it to $x, y, x'$ and $y'$. The obtained graph is clearly a connected $4$-regular graph of order $n$, but the pair $\{z,x\}$ is not balanced as $W_{z,x}$ contains at least $z$, $y$ and the whole copy $\Delta'$. In a similar way a connected cubic graph of order $2n$ for each $n \geq 5$ which is not highly distance-balanced can be constructed (this time start with two copies of a cubic graph of order $n-1$ and then add a pair of connected vertices in a similar way as before, but see also~\cite[Theorem~5.7]{MikSpa18}). The following problem thus might be of interest.

\begin{problem}
\label{prob:reg}
For each positive integer $n \geq 10$ determine the smallest possible nonzero value of the distance-unbalancedness index among all connected regular graphs of order $n$ and classify all graphs attaining this value.
\end{problem}

\section*{Acknowledgements}

\noindent
\v S. Miklavi\v c acknowledges support by the Slovenian Research Agency (research program P1-0285 and research projects N1-0062, J1-9110, J1-1695, N1-0140, N1-0159, J1-2451).

\noindent
P.~\v Sparl acknowledges support by the Slovenian Research Agency (research program P1-0285 and research projects J1-9108, J1-9110, J1-1694, J1-1695, J1-2451).

\end{document}